\documentclass[preprint,11pt]{elsarticle}
\usepackage{lineno,hyperref}
\modulolinenumbers[5]

\usepackage{color}
\usepackage{amsmath}
\usepackage{amsfonts,amsthm,amssymb}
\usepackage{amsfonts}
\usepackage{graphics}
\usepackage{pstricks}
\usepackage{graphicx}
\usepackage{cleveref}
\usepackage{extarrows,chngpage,array,float}
\usepackage{amssymb}
\usepackage{latexsym,bm}
\usepackage{mathrsfs}
\usepackage{algorithm}
\usepackage{caption}
\usepackage{comment}
\usepackage{float}
\usepackage{tikz}
\usepackage{multicol} 
\usepackage{algpseudocode}
\usepackage{booktabs}
\usepackage{longtable}
\newtheorem{theorem}{Theorem}[section]
\newtheorem{lemma}[theorem]{Lemma}
\newtheorem{definition}[theorem]{Definition}
\newtheorem{proposition}[theorem]{Proposition}
\newtheorem{corollary}[theorem]{Corollary}

\newtheorem{problem}[theorem]{Problem}

\newtheorem{example}[theorem]{Example}
\allowdisplaybreaks   

\numberwithin{equation}{section}
\journal{}

\begin{document}

\begin{frontmatter}

\title{Combinatorial interpretations of Tutte polynomials at the point $(2,-1)$}

\author{Yu Chen$^a$, \ \ Xian'an Jin$^{a,b}$, \ \ Tianlong Ma$^{c,}$\footnote{Corresponding author.} 
\\[1ex]
\small $^a$School of Mathematical Sciences\\[-0.8ex]
\small  Xiamen University\\[-0.8ex]
\small P. R. China	
\\[0.3ex]
\small $^b$Qinghai Minzu University\\[-0.8ex]
\small P. R. China	
\\[0.3ex]
\small $^c$School of Science\\[-0.8ex]
\small Jimei University\\[-0.8ex]
\small P. R. China
\\
\small\tt Email: yuch236546@outlook.com, xajin@xmu.edu.cn, tianlongma@aliyun.com}

\begin{abstract}

Let $G$ be a simple connected graph, and let $T_{G}(x,y)$ be the Tutte polynomial of $G$. Motivated by the works in \cite{Ma}, we, in this paper, introduce the even-left spanning forests of $G$ and odd $G$-partitionable permutations, and show that $T_{G}(2,-1)$ is equal to both the number of even-left spanning forests of $G$ and the number of odd $G$-partitionable permutations. In particular, for a complete graph $K_n$, we prove that $T_{K_{n}}(2,-1)$ is the number of alternating permutations on $\{1,2,\dots,n+1\}$, using two distinct techniques: a recurrence relation and an explicit bijection construction.

\end{abstract}

\begin{keyword}
alternating permutation; even-left spanning forest; odd $G$-partitionable permutation; Tutte polynomial.
\end{keyword}

\end{frontmatter}

\section{Introduction}

The Tutte polynomial, defined by Tutte \cite{Tutte} in 1954, is a generalization of the chromatic polynomial. Given a  graph $G=(V,E)$, the Tutte polynomial of $G$ is 
$$T_{G}(x,y)=\sum_{A\subset E}(x-1)^{r(G)-r(A)}(y-1)^{\lvert A \rvert -r(A)},$$
where $r(A)$ is the rank of $A$, defined as $\lvert A\rvert -c(A)$, and $c(A)$ is the number of connected components of the spanning subgraph $(V,A)$. It is easy to see from the above definition of Tutte polynomial that $T_{G}(1,1)$, $T_{G}(2,1)$, $T_{G}(1,2)$ and $T_{G}(2,2)$ encode the number of spanning trees of $G$, spanning forests of $G$, connected spanning subgraphs of $G$ and spanning subgraphs of $G$ respectively. 

Combinatorial interpretations of Tutte polynomials at integer point have been studied extensively. In 1973, Stanley \cite{Stanley} proved that $T_{G}(2,0)$ is the number of acyclic orientations of $G$. In 1978, Read and Rosenstiehl \cite{Rosenstiehl} showed that $T_G(-1, -1) = (-1)^{|E(G)|}(-2)^{\dim(\mathcal{B})}$, where $\mathcal{B} = \mathcal{C} \cap \mathcal{C}^\perp$ and $\mathcal{C},\ \mathcal{C}^\perp$ are the cycle space, cocycle space of $G$ over the finite field $GF(2)$ respectively. In particular, if $G$ is a connected plane graph, Martin \cite{Martin} showed $T_G(-1, -1) = (-1)^{|E(G)|}(-2)^{a(\tilde{G}_m) - 1}$, where $\tilde{G}_m$ is the directed medial graph of $G$ and $a(\tilde{G}_m)$ is the number of anticircuits in $\tilde{G}_m$.
In 1983, Greene and Zaslavsky \cite{Greene} proved that $T_{G}(1,0)$ is the number of acyclic orientations of $G$ with a single specified source and $T_{G}(0,2)$ is the number of totally cyclic orientations of $G$. In 2007, Gioan \cite{Gioan} proved $T_{G}(0,1)$ is the number of score vectors of strongly connected orientations of $G$.

Let $G$ be a simple connected graph. In 2012, Ma and Yeh \cite{Ma} proved that $T_{G}(1,-1)$ is the number of even-left spanning trees of $G$. To our knowledge, this is the first result for the integer point $(x,y)$ with $xy<0$. Motivated by and based on their works,  we, in this paper, shall first give combinatorial interpretations for $T_{G}(2,-1)$. All undefined terms in this section will be introduced and defined in subsequent sections.
\begin{theorem}\label{thm1}
	Let $G$ be a simple connected graph. Then $T_{G}(2,-1)$ is the number of even-left spanning forests of $G$. 
\end{theorem}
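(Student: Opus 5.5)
We do not yet have the definition of an even-left spanning forest, so we fix one modeled on Ma and Yeh's even-left spanning trees. Label $V(G)=\{1,\dots,n\}$. For each spanning forest $F$, root every component at its smallest vertex, and call $F$ even-left if each component tree is an even-left tree in the sense of \cite{Ma}. The plan is to reduce Theorem~\ref{thm1} to the Ma--Yeh theorem $T_H(1,-1)=\#\{\text{even-left spanning trees of }H\}$ via a standard forest expansion of $T_G(x,y)$ around $x=1$.

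The key algebraic identity we would establish first is
\[
T_G(2,y)=\sum_{\pi}\prod_{B\in\pi}T_{G[B]}(1,y),
\]
where $\pi$ ranges over all partitions of $V(G)$ into blocks $B$ with $G[B]$ connected. To prove it, start from the subset expansion. Since $G$ is connected, $r(G)-r(A)=c(A)-1$, so
\[
T_G(2,y)=\sum_{A\subseteq E}(y-1)^{|A|-r(A)}.
\]
Group the sets $A$ by the partition $\pi$ of $V$ into the vertex sets of the components of $(V,A)$. Each such $A$ decomposes uniquely as a union of connected spanning edge sets $A_B\subseteq E(G[B])$, one for each block $B$. Moreover $|A|-r(A)=\sum_B(|A_B|-|B|+1)$. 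The sum therefore factors into $\prod_B\sum_{A_B}(y-1)^{|A_B|-r(A_B)}$, and each factor equals $T_{G[B]}(1,y)$ because the $(x-1)$-exponent vanishes for connected spanning sets.

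Now set $y=-1$ and apply the Ma--Yeh theorem to each connected induced subgraph $G[B]$. This requires that $G[B]$ be simple and connected, which it is. The factor $T_{G[B]}(1,-1)$ then counts even-left spanning trees of $G[B]$, where vertices keep their labels from $G$. Here we must check that the Ma--Yeh definition is invariant under order-preserving relabeling, which holds since it depends only on relative order. So the right-hand side counts pairs consisting of a partition $\pi$ and a choice of an even-left spanning tree in each block.

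The final step is the bijection: such pairs correspond exactly to spanning forests of $G$ whose components are each even-left trees, with $\pi$ recovered as the component partition. This matches the paper's definition of even-left spanning forest, provided that definition is componentwise, which I expect it to be given the title of \cite{Ma}. The main obstacle is precisely this definitional matching. If the paper defines "even-left" for forests by a global condition, such as counting vertices to the left of each vertex across the whole forest, we must show it agrees with the blockwise condition. I would try first to verify that the relevant statistic only involves vertices within a vertex's own component (descendants or siblings in that tree). If it does not, I would instead rerun the Ma--Yeh sign-reversing-involution proof directly on forests, using the deletion--contraction recurrence.
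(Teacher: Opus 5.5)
Your argument is correct and takes a genuinely different route from the paper.

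The paper works forest by forest. It imports the Kosti\'c--Yan expansion $T_G(1+x,y)=\sum_{F}x^{c(F)-1}y^{|R^{\gamma}_3(G;F)|}$ and notes that it is independent of the choice function. It then picks $\gamma_\tau$ so that $|R^{\gamma_\tau}_3(G;F)|=|N_F|$, which gives Lemma~\ref{Dimi}. Finally it cancels the non-even-left forests with the involution $\psi_G$, which applies the Ma--Yeh involution to the first component that fails to be even-left.

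You instead factor the subset expansion over the component partition, $T_G(2,y)=\sum_{\pi}\prod_{B\in\pi}T_{G[B]}(1,y)$. Your derivation of this is right, including the nullity count and the restriction to blocks with $G[B]$ connected. You then use only the Ma--Yeh evaluation $T_H(1,-1)=|\mathscr{T}_H|$ as a black box on each block.

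Comparing the two:
\begin{itemize}
\item Your route bypasses multiparking functions, choice functions and the identification $|R_3|=|N_F|$ altogether. It also makes plain that the theorem is just the multiplicativity of a componentwise definition.
\item The paper's route gives in exchange an explicit sign-reversing involution on all of $\mathcal{F}_G$ and the forest statistic $|N_F|$. Your factorization would also recover that statistic if combined with the tree-level identity $T_H(1,y)=\sum_{T}y^{|N_T|}$.
\end{itemize}

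The obstacle you anticipate does not arise. The paper defines $F$ to be even-left exactly when each component $T_i$ is an even-left spanning tree of $G[V(T_i)]$. Algorithm A is also stated for arbitrary ordered vertex sets, so no relabeling check is needed. Hence your pairs $(\pi,(T_B)_{B\in\pi})$ correspond directly to $\mathscr{F}_G$.

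One small correction: Ma--Yeh's Algorithm A starts from the maximum vertex, not the smallest. Drop the phrase about rooting each component at its smallest vertex. Since you ultimately defer to the Ma--Yeh definition on each $G[B]$, nothing else changes.
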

Ma and Yeh \cite{Ma} defined odd $G-$permutations for a simple connected graph $G$ and proved the number of even-left spanning trees is equal to the number of odd $G-$permutations. In this paper, we further define odd $G-$partitionable permutations for a simple connected graph $G$ and prove the following conclusion.
\begin{theorem}\label{thm2}
	Let $G$ be a simple connected graph. Then there is a bijection from $\Upsilon_{G,odd}$ to the set of even-left spanning forests of $G$, where $\Upsilon_{G,odd}$ is the set of odd $G$-partitionable permutations. Hence, $T_{G}(2,-1) = \left| \Upsilon_{G,odd} \right|$.
\end{theorem}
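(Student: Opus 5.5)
The plan is to reduce Theorem~\ref{thm2} to the tree case of Ma and Yeh \cite{Ma}, applied componentwise, and then invoke Theorem~\ref{thm1}.

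Recall the tree case. For a connected graph $H$ on a labelled vertex set, Ma and Yeh give a bijection $\phi_H$ from the odd $H$-permutations to the even-left spanning trees of $H$. Their construction reads the permutation left to right and attaches each new vertex to the appropriate earlier neighbour, with the parity condition matching the even-left condition.

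An odd $G$-partitionable permutation should be, by its definition, a permutation of $V(G)$ that splits canonically into consecutive blocks $\pi=\pi_1\pi_2\cdots\pi_k$. I expect the splitting to be fixed by a left-to-right minimum or maximum rule. Each block $\pi_i$ is an odd $G[V_i]$-permutation, where $V_i$ is the vertex set of $\pi_i$, and each $G[V_i]$ is connected.

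The bijection $\Phi$ will be defined by
\[
\Phi(\pi)=\bigcup_{i=1}^{k}\phi_{G[V_i]}(\pi_i).
\]
This is a spanning forest whose components are the trees $\phi_{G[V_i]}(\pi_i)$.

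The steps, in order, are as follows.

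\textbf{(1)} Show that $\Phi(\pi)$ is even-left. The even-left property of a forest should be defined componentwise, via the edges of each component tree relative to the induced subgraph on its vertices and the ordering conventions. Each component $\phi_{G[V_i]}(\pi_i)$ is even-left in $G[V_i]$ by \cite{Ma}. It remains to check that the even-left condition for a tree inside $G$ coincides with the condition inside $G[V_i]$. If the forest definition only counts edges within components, this is immediate.

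\textbf{(2)} Construct the inverse. Given an even-left spanning forest $F$ with components $T_1,\dots,T_k$, order the components by the canonical rule, for instance by increasing minimal vertex label. Apply $\phi^{-1}_{G[V(T_i)]}$ to each $T_i$ and concatenate the resulting words in that order.

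\textbf{(3)} Check that the concatenation is an odd $G$-partitionable permutation whose canonical block decomposition recovers exactly the words $\phi^{-1}(T_i)$. Then $\Phi$ and this map are mutually inverse.

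The conclusion $T_G(2,-1)=|\Upsilon_{G,odd}|$ then follows from Theorem~\ref{thm1}.

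The main obstacle is step (3): unique decodability of the block decomposition. The block-splitting rule must be recognizable from the concatenated word alone. Two conditions matter:
\begin{itemize}
\item each block's first letter must satisfy the cut condition, such as being a new minimum;
\item no position inside a block may accidentally satisfy it.
\end{itemize}
To secure this, I would first check whether the Ma--Yeh odd $H$-permutations always begin with the minimal vertex of $H$, i.e.\ whether their trees are rooted at the smallest vertex. If they are, then ordering blocks by decreasing first element makes each block start with a new left-to-right minimum. Every later letter in the block exceeds that block's first letter, so block starts are exactly the left-to-right minima.

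If instead the definition uses connectivity, with a block ending when the next vertex has no $G$-neighbour among the current block's vertices, I would verify that this agrees with the forest components in both directions. The same bookkeeping handles that case.
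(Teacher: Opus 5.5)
Your proposal is correct and is essentially the paper's proof: the paper likewise applies the Ma--Yeh bijection $\theta_{G_i}$ to each block (even-left forests are defined componentwise), and for the inverse applies $\theta^{-1}_{G[V(T_i)]}$ to each component and concatenates. The one correction is orientation: odd $H$-permutations begin with the \emph{maximum} vertex of $H$, so the canonical splitting is Gessel's basic decomposition at left-to-right maxima, with components concatenated in increasing order of their maximum vertices, and your unique-decodability argument goes through verbatim with ``minimum'' replaced by ``maximum''.
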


The \textit{alternating permutations} on $[n]=\{1,2,\dots,n\}$ are those permutations with successive pairs of terms alternately in decreasing and increasing order. In general, a permutation $\sigma$ is alternating if $$\sigma(1)>\sigma(2)<\sigma(3)>\sigma(4)<\cdots \sigma(n).$$
Let $S_{n}$ be the set of permutations on $[n]$ and $A_n$ be the set of alternating permutation on $[n]$. Let $a_{n}=\lvert A_n\rvert$. We put $a_0=1$ by convention. 

Let $K_{n}$ be the complete graph with $n$ vertices. Ma and Yeh \cite{Ma} proved that $a_n$ is equal to the number of odd $K_{n+1}-$permutations and also gave a bijection from the set of odd $K_{n+1}-$permutations to $A_{n}$, which indicates that $T_{K_{n+1}}(1,-1)$ is the number of alternating permutations on $[n]$, that is, $T_{K_{n+1}}(1,-1)=a_n$. We have the following analogous conclusion.
\begin{theorem}\label{thm3}
	For any positive integer $n$,  $\left|\Upsilon_{K_{n},odd}\right|  = a_{n+1}$. 
\end{theorem}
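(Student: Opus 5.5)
The plan is to reduce the statement to an identity about Tutte polynomials of complete graphs. I will then check that identity with exponential generating functions. By Theorem~\ref{thm2}, $|\Upsilon_{K_n,odd}| = T_{K_n}(2,-1)$, so it suffices to show $t_n := T_{K_n}(2,-1)$ equals $a_{n+1}$. The input I will use is the Ma--Yeh result quoted in the introduction, $T_{K_{m}}(1,-1)=a_{m-1}$ for $m\ge 1$.

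\textbf{Step 1: decompose $T_G(2,y)$ by components.} At $x=2$ the factor $(x-1)^{r(G)-r(A)}$ equals $1$, so
$$T_G(2,y)=\sum_{A\subseteq E}(y-1)^{|A|-r(A)}.$$
Group the subsets $A$ according to the partition $\pi$ of $V$ into the vertex sets of the components of $(V,A)$. For a fixed $\pi$, the set $A$ is a disjoint union of edge sets $A_B\subseteq E(G[B])$, one for each block $B$, with each $(B,A_B)$ connected. The exponent $|A|-r(A)$ is additive over blocks. For a connected graph $H$, $T_H(1,y)=\sum_{A'}(y-1)^{|A'|-r(A')}$ where $A'$ ranges over connected spanning subgraphs of $H$. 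Hence
$$T_G(2,y)=\sum_{\pi}\prod_{B\in\pi}T_{G[B]}(1,y),$$
where $\pi$ ranges over partitions of $V$ whose blocks induce connected subgraphs. For $G=K_n$ every block induces some $K_{|B|}$, so
$$t_n=\sum_{\pi\vdash[n]}\prod_{B\in\pi}a_{|B|-1}.$$
Here a singleton block contributes $a_0=1$, and $t_0=1$.

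\textbf{Step 2: exponential formula.} Let $A(x)=\sum_{n\ge0}a_nx^n/n!=\sec x+\tan x$. This is the classical Andr\'e generating function; it is insensitive to whether alternating permutations start with a descent or an ascent, by complementation. The exponential formula gives
$$\sum_{n\ge 0}t_n\frac{x^n}{n!}=\exp\Big(\sum_{m\ge1}a_{m-1}\frac{x^m}{m!}\Big)=\exp\Big(\int_0^x A(s)\,ds\Big).$$
Since $\int_0^x\sec s\,ds=\ln(\sec x+\tan x)$ and $\int_0^x\tan s\,ds=-\ln\cos x$, this equals
$$(\sec x+\tan x)\sec x=\frac{1+\sin x}{\cos^2x}.$$
On the other hand,
$$\sum_{n\ge0}a_{n+1}\frac{x^n}{n!}=A'(x)=\sec x\tan x+\sec^2x=\frac{1+\sin x}{\cos^2 x}.$$
Comparing coefficients gives $t_n=a_{n+1}$.

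\textbf{Alternative and main obstacle.} To avoid analytic generating functions, I would differentiate the identity $\sum t_n x^n/n!=\exp(\int A)$, or equivalently condition on the block containing vertex $n+1$. This gives the recurrence
$$t_{n+1}=\sum_{k=0}^{n}\binom{n}{k}a_k\,t_{n-k}.$$
I would then check by induction that $a_{n+2}$ satisfies the same recurrence with $t_0=a_1=1$. That amounts to the standard identity $A''=A'A$, since $(\sec x\tan x+\sec^2 x)'=A'(x)A(x)$. The only delicate step is Step~1: I must verify that the component decomposition really factors $T_G(2,y)$ into the $T_{G[B]}(1,y)$, in particular that the rank exponent is additive over blocks, and that the quoted Ma--Yeh value $T_{K_m}(1,-1)=a_{m-1}$ holds down to $m=1$. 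The rest is routine generating-function algebra. A bijective proof is deferred to the later construction.
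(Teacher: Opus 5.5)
Your proof is correct, but it takes a different route from the paper's. The paper never uses the Tutte polynomial for this theorem. It splits $\sigma\in\Upsilon_{K_n,odd}$ at the basic component $\sigma_1$ that begins with $n$, and reads off from the definition that $q_n=\sum_{m=1}^{n}\binom{n-1}{m-1}q_{n-m}p_m$ with $p_m=|S_{K_m,odd}|$. It then checks by induction that $p_{n+2}$ satisfies the same recurrence, using the known recurrence \eqref{eq4} for $p_n$ and $p_{n+1}=a_n$.

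You instead use Theorem~\ref{thm2} to replace $q_n$ by $T_{K_n}(2,-1)$, factor $T_G(2,y)$ over vertex partitions into the $T_{G[B]}(1,y)$, and finish with the exponential formula and $\sec x+\tan x$. That is essentially Merino's generating-function proof of $T_{K_n}(2,-1)=T_{K_{n+2}}(1,-1)$, cited in the introduction, composed with Theorem~\ref{thm2}.

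Underneath, the two arguments are the same computation:
\begin{itemize}
\item your recurrence $t_{n+1}=\sum_{k}\binom{n}{k}a_k t_{n-k}$ is the paper's recurrence for $q_n$ after substituting $p_{k+1}=a_k$;
\item your identity $A''=AA'$ is exactly \eqref{eq4};
\item under the bijection $\iota$ of Theorem~\ref{thm2}, the basic components of $\sigma$ become the tree components of the forest, which is your block decomposition.
\end{itemize}

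The paper's version is independent of Sections~2--3. It counts $\Upsilon_{K_n,odd}$ directly from its definition, using only $p_m=a_{m-1}$, so together with Theorem~\ref{thm2} it gives a new proof of Merino's identity. Its split $\sigma=\sigma'\,n\,\sigma''$ is also the recursion on which the bijection $\Psi_n$ of Theorem~\ref{thm4} is built.

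Your version needs heavier inputs: Theorem~\ref{thm2}, which rests on the Kosti\'c--Yan machinery, and the evaluation $T_{K_m}(1,-1)=a_{m-1}$. In return it gives a closed form with no induction. Your Step~1 also holds for every graph. At $y=-1$, combined with the Ma--Yeh result $T_H(1,-1)=|\mathscr{T}_H|$, it gives Theorem~\ref{thm1} at once. This is because an even-left spanning forest is exactly a partition of the vertices into blocks inducing connected subgraphs, together with an even-left spanning tree on each block.

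The two points you flagged are fine. The nullity $|A|-|V|+c(A)$ is additive over components; note that the paper's displayed $r(A)=|A|-c(A)$ is a typo for $|V|-c(A)$. And $T_{K_1}(1,-1)=1=a_0$.
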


Further, we prove the following stronger result than Theorem \ref{thm3}.

\begin{theorem}\label{thm4}
	There is a bijection from $\Upsilon_{K_{n},odd}$ to $A_{n+1}$.
\end{theorem}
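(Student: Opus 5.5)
The plan is to reduce Theorem \ref{thm4} to the Ma--Yeh bijection between odd $K_{m+1}$-permutations and $A_m$, applied one block at a time, and then to glue the pieces together. I expect an odd $K_n$-partitionable permutation to amount to a set partition $\{B_1,\dots,B_k\}$ of $[n]$ (the vertex sets of the components of the corresponding even-left spanning forest) with an odd $K_{|B_i|}$-permutation on each block. Since $T_{K_m}(1,-1)=a_{m-1}$, this gives
\[
\left|\Upsilon_{K_{n},odd}\right|=\sum_{\{B_1,\dots,B_k\}\vdash[n]}\ \prod_{i=1}^{k}a_{|B_i|-1}.
\]
A first check is generating functions. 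The exponential generating function of $a_{m-1}$, indexed by $m$, is $\int(\sec x+\tan x)\,dx=\ln\frac{1}{1-\sin x}$. The exponential formula then gives total $\frac{1}{1-\sin x}=\frac{d}{dx}(\sec x+\tan x)$, which is exactly the exponential generating function of $a_{n+1}$. This already proves Theorem \ref{thm3} and suggests the combinatorial structure the bijection should follow.

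To make it bijective, I would convert this identity into a recurrence with a combinatorial meaning. Single out the block $B$ containing the vertex $n$. Removing $B$ leaves an arbitrary odd partitionable structure on $[n]\setminus B$, so
\[
\left|\Upsilon_{K_{n},odd}\right|=\sum_{j}\binom{n-1}{j-1}a_{j-1}\left|\Upsilon_{K_{n-j},odd}\right|.
\]
On the alternating side I need the matching identity $a_{n+1}=\sum_j\binom{n-1}{j-1}a_{j-1}a_{n-j+1}$. The natural way to get it is to decompose an alternating permutation of $[n+1]$ at the position of a distinguished entry, for instance the entry $n+1$ or the entry $1$. The left segment should have length $j-1$ and be alternating, and the right segment, after standardization, should be an alternating permutation of length $n-j+1$, with its first descent condition preserved. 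The derivative form $(1-\sin x)^{-1}$, equivalently $E'=\tfrac12(1+E^2)$ with $E=\sec x+\tan x$, tells me which boundary conventions (up--down versus down--up, and the possibly empty left piece) are needed for the lengths and binomial choices to line up.

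The bijection is then defined recursively. Given an element of $\Upsilon_{K_{n},odd}$, take the block $B\ni n$ with $|B|=j$, and apply the Ma--Yeh map to its odd $K_j$-permutation to get an element of $A_{j-1}$. Recursively map the rest to $A_{n-j+1}$. Relabel both pieces using the set $B\setminus\{n\}$ and its complement, which supplies the $\binom{n-1}{j-1}$ choice, and splice them around the distinguished entry. Injectivity and surjectivity follow by induction on $n$, since the splitting position and the label sets can be read off from the output.

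The main obstacle is the splicing step. I must choose the distinguished entry and the relabeling so that concatenation always produces an alternating permutation of $[n+1]$, and so that the decomposition is unique; this requires parity and boundary conventions at the junction to match exactly the binomial recurrence. I would first try splitting at the entry $1$, which must sit at an even position (a valley), and test the resulting map on small $n$ ($a_2=1,a_3=2,a_4=5,a_5=16$) against direct enumeration of $\Upsilon_{K_n,odd}$ before proving it in general.
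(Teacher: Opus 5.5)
\textbf{Gap: the splicing rule is missing, and the one you suggest cannot work.}

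Your reduction is sound. An element of $\Upsilon_{K_n,odd}$ is a set partition of $[n]$ with an odd permutation on each block, and the block $B\ni n$ is the last basic component $\sigma_1$. Your recurrence is exactly the paper's recurrence for $q_n$, and your generating-function check is correct and already gives $|\Upsilon_{K_n,odd}|=a_{n+1}$.

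The trouble is that Theorem~4 consists precisely of the splicing step, which you leave open. Moreover, the concrete rule you propose fails.

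If you always split at the entry $1$, then $1$ sits at an even position, so the left segment always has odd length $j-1$. The terms with $j$ odd are therefore never produced. In particular the term $j=1$ is missed: there $B=\{n\}$, the left segment must be empty, and this term accounts for $a_n$ of the permutations.

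Fixing a single entry also leaves all $n$ other labels free. What splitting at $1$ actually proves is $a_{n+1}=\sum_{k\ \mathrm{odd}}\binom{n}{k}a_k a_{n-k}$, a different identity from the one with $\binom{n-1}{j-1}$. Splitting always at $n+1$ fails symmetrically, since only even left lengths occur. No single fixed distinguished entry can realize your recurrence term by term.

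\textbf{The missing idea: use both extremes.} Split at whichever of $1$ and $n+1$ occurs \emph{first}.
\begin{itemize}
\item The other extreme is then forced into the right segment. So only the $n-1$ middle labels are distributed, which produces the factor $\binom{n-1}{j-1}$.
\item The split entry is $n+1$ exactly when the left length $j-1$ is even, and $1$ exactly when it is odd.
\item The left segment is an arbitrary down--up arrangement of its labels.
\item The right segment contains the other extreme and is an arbitrary alternating arrangement of its labels. It starts with an ascent in the first case and with a descent in the second, so a relative complement normalizes it.
\end{itemize}
This realizes $a_{n+1}=\sum_{\ell=0}^{n-1}\binom{n-1}{\ell}a_\ell a_{n-\ell}$ bijectively. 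The inverse reads off which extreme comes first, and your induction then goes through.

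\textbf{Comparison with the paper.} The paper does the mirror image of this on $\{0,\dots,n\}$. It places the recursive image $s_{\sigma'}(\sigma')$, which contains $0$, to the left of $n$. It places the complemented Ma--Yeh image of the block of $n$ (with $n$ removed) to the right. When $n$ sits at an odd position of $\sigma$, it applies a global relative complement. As a result the split entry is always the \emph{later} of $0$ and $n$, and the inverse distinguishes its cases by comparing the positions of $0$ and $n$.
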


There are a lot of research establishing the relation between the value of $T_{K_{n+1}}(x,y)$ at some points and the number of alternating permutations on $[n]$. 
In \cite{Goulden,Kuznetsov}, authors proved that $T_{K_{n+1}}(1,-1)=a_n$. A tree on an ordered vertex set with rooted at minimum vertex is said to be \textit{increasing} if its vertices increase along the paths away from the root. An increasing tree is said \textit{even} if every non-root vertex has an even number of children. Kuznetsov, Pak and Postnikov \cite{Kuznetsov} proved that $a_n$ is equal to the number of even increasing spanning trees of $K_{n+1}$. An increasing tree is called a \textit{0-1-2 increasing tree} if its each vertex has at most $2$ children. Donaghey \cite{Donaghey} gave a bijection from 0-1-2 increasing spanning trees of $K_{n}$ to $A_n$ and hence $a_n$ is also equal to the number of 0-1-2 increasing spanning trees of $K_{n}$. Merino \cite{Merino} proved that $T_{K_n}(2,-1)=T_{K_{n+2}}(1,-1)$ using the method of exponential generating function. A forest is said to be even increasing if its all connected components are even increasing trees. Goodall et al. \cite{Goodall} proved this result again by constructing a bijection from the set of even increasing spanning trees of $K_{n+2}$ to the set of even increasing spanning forests of $K_{n}$. In this paper, we give a new proof for $T_{K_{n}}(2,-1) = a_{n+1}$ in Theorem \ref{thm4} by constructing a new bijection. 

The paper is organized as follows. In Section 2, we shall introduce even-left spanning forests of a graph and prove Theorem \ref{thm1}. In Section 3, we give the definition of odd $G$-partitionable permutations and establish a bijection from them to even-left spanning forests of a graph $G$ to prove Theorem \ref{thm2}.  In Section 4, we prove Theorems \ref{thm3} and \ref{thm4}. 

\section{Even-left spanning forests of a graph}

 For a graph $G$, we denote the set of spanning trees and spanning forests of $G$ by $\mathcal{T}_G$ and $\mathcal{F}_G$, respectively. 
Let $G$ be a simple connected graph with the vertex set $[n]=\{1,2,\dots,n\}$ (can also be any ordered set with cardinality $n$) throughout the paper. Given two disjoint subsets $U$ and $W$ of the vertex set of $G$, we write $E_{G}(U,W)$ for the set of edges joining a vertex in $U$ to a vertex in $W$. 

We first refer to \cite{Ma} to give the definition of even-left spanning trees of $G$. Given a spanning tree $T$ of $G$, we associate $T$ with a sequence$$\pi_{T}=(\pi_T(1),\pi_T(2),\dots,\pi_T(n))$$
on vertices of $G$ by the following algorithm:

\medskip
\noindent\textbf{Algorithm A.}
    \begin{itemize}
        \item \textbf{Step 1.} Let $\pi_T(1)$ be the maximum vertex of $G$.
        \item \textbf{Step 2.} Assume that $\pi_T(1), \pi_T(2), \ldots, \pi_T(i)$ are determined. Let $U_i = \{\pi_T(j) \mid j = 1, 2, \ldots, i\}$. Let $v = \min\{w \in V(G) \setminus U_i \mid E_T(U_i, \{w\}) \neq \emptyset\}$ and set $\pi_T(i + 1) = v$.
    \end{itemize}

For any vertex $v\in V(G)\backslash \{\pi_{T}(1)\}$, there exists a unique vertex $u$ such that $uv \in E(T)$ and $\pi_{T}^{-1}(u)<\pi^{-1}_{T}(v)$. We say $u$ is the predecessor of $v$ and write $u=p_T(v)$. Define $N_{T,v}$ as the set of vertices $w$ such that $wv \in E(G)\backslash E(T)$ and $\pi^{-1}_{T}(p_{T}(v))<\pi^{-1}_{T}(w)<\pi^{-1}_{T}(v)$. Define $N_{T}=\bigcup_{v\in V(G)}\{wv:w\in N_{T,v}\}$.

Given a vertex $v\in V(G)\backslash \{\pi_{T}(1)\}$, for any $w\in V(G)$, say $(w,v)$ is an inversion of the sequence $\pi_{T}$ if $w>v$ and $\pi^{-1}_{T}(w)<\pi^{-1}_{T}(v)$. There exists a unique vertex $r$ such that $(r,v)$ is an inversion of $\pi_T$ and $\pi_{T}(j)<v$ for any $\pi^{-1}_{T}(r)<j<\pi^{-1}_{T}(v)$. We say $r$ is a \textit{$(T;v)$-rightmost inversion vertex}. It is easy to see from Algorithm A that $\pi^{-1}_{T}(r)\le \pi^{-1}_{T}(p_T(v))$. Define $M_{T,v}$ as the set of vertices $s$ such that $sv \in E(G)\backslash E(T)$ and $\pi^{-1}_{T}(r)\le \pi^{-1}_{T}(s)<\pi^{-1}_{T}(p_T(v))$.
\begin{definition}\emph{\cite{Ma}}
Let $G$ be a simple connected graph, and let $T$ be a spanning tree of $G$. A vertex $v\in V(G)\backslash \{\pi_{T}(1)\}$ is said to be $T$-even if $\lvert N_{T,v}\rvert$ is even, and $T$ is said to be even if every vertex except $\pi_{T}(1)$ is $T$-even. A vertex $v\in V(G)\backslash \{\pi_{T}(1)\}$ is said to be $T$-left if $M_{T,v}=\emptyset$, and $T$ is said to be left if every vertex except $\pi_{T}(1)$ is $T$-left. If a spanning tree is both even and left, then it is called an even-left spanning tree.
\end{definition}
Let $\mathscr{T}_{G}$ be the set of even-left spanning trees of $G$. Ma and Yeh \cite{Ma} constructed an involution from $\mathcal{T}_{G} \backslash \mathscr{T}_{G}$ to itself by removing an edge from $E(T)$ and adding an edge from $E(G)\backslash E(T)$. We briefly describe this involution here. For $T\in \mathcal{T}_{G} \backslash \mathscr{T}_{G}$, there are some vertices $w$ such that $|N_{T,w}| \equiv 1\ (\text{mod}\ 2)$ or $M_{T,w} \neq \emptyset$. We select the first vertex $v$ in the sequence $\pi_T$ that satisfies the condition. Suppose $|N_{T,v}| \equiv 1\ (\text{mod}\ 2)$, then let $u$ be the vertex in $N_{T,v}$ such that $\pi^{-1}_{T}(u)\le \pi^{-1}_{T}(w)$ for any $w \in N_{T,v}$. A new spanning tree $T'$ can be obtained by adding the edge $uv$ and removing the edge $p_{T}(v)v$ from $T$. Conversely, as for $T'$, $v$ is also the first vertex in $\pi_{T'}$ that satisfies the above condition, and $|N_{T',v}| \equiv 0\ (\text{mod}\ 2)$, $M_{T',v} \neq \emptyset$. Let $u'$ be the vertex in $M_{T',v}$ such that $\pi_{T'}^{-1}(u')\ge \pi_{T'}^{-1}(w)$ for any $w \in M_{T',v}$. Similarly, a spanning tree $T''$ can be obtained by adding the edge $u'v$ and removing the edge $p_{T'}(v)v$ from $T'$. Then $T=T''$ and this involution is established in this way.

\begin{theorem}\emph{\cite{Ma}}\label{J.Ma1}
	There is an involution $\phi_{G}$ from the set $\mathcal{T}_{G} \backslash \mathscr{T}_{G}$ to itself such that $$\left| \lvert N_T\rvert - \lvert N_{\phi_{G}(T)}\rvert \right| = 1$$ for any $T\in \mathcal{T}_{G} \backslash \mathscr{T}_{G}$.
\end{theorem}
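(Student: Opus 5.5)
The plan is to make the involution sketched before the statement precise and to check, in order, that it is well defined, that it is an involution, and that it changes $|N_T|$ by exactly one. The key fact I will aim for is that the swap changes neither the sequence $\pi_T$ nor any predecessor other than that of $v$. Once that holds, every quantity $N_{T,w}$ and $M_{T,w}$ with $w\neq v$ is unchanged.

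\emph{Step 1 (the forward map).} Fix $T\in\mathcal{T}_G\setminus\mathscr{T}_G$ and let $v$ be the first vertex of $\pi_T$ that is either not $T$-even or not $T$-left. Suppose $|N_{T,v}|$ is odd, and let $u\in N_{T,v}$ be the earliest such vertex in $\pi_T$. Put $T'=T-p_T(v)v+uv$.

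Since every descendant of $v$ in $T$ appears after $v$ in $\pi_T$, while $u$ appears before $v$, the vertex $u$ lies outside the branch hanging from $v$. Hence $T'$ is a spanning tree.

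Next I show $\pi_{T'}=\pi_T$. By Algorithm A, $v$ is available (adjacent to the visited set) from the moment $p_T(v)$ is visited until $v$ is chosen, and it is not chosen earlier. So every vertex strictly between $p_T(v)$ and $v$ in $\pi_T$ is smaller than $v$. In $T'$ the vertex $v$ becomes available only once $u$ is visited, which happens in that same window. All other adjacencies are unchanged, so the greedy choices coincide. Consequently $p_{T'}(w)=p_T(w)$ for all $w\neq v$, and $p_{T'}(v)=u$.

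\emph{Step 2 (how the statistics change).} For $w\neq v$, the sets $N_{T,w}$ and $M_{T,w}$ involve only non-tree edges at $w$ whose other endpoint lies before $w$ in $\pi_T$. The only edges whose tree status changes are $uv$ and $p_T(v)v$. Each of these could matter for some $w\neq v$ only if $w\in\{u,p_T(v)\}$ with $v$ earlier than $w$, which is impossible. So those sets are unchanged.

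At $v$ itself, $N_{T',v}$ consists of the non-tree neighbours of $v$ strictly between $u$ and $v$. This is $N_{T,v}\setminus\{u\}$, because $u$ was the earliest element of $N_{T,v}$ and $p_T(v)$ precedes $u$. Hence $|N_{T',v}|=|N_{T,v}|-1$ is even, and $\bigl||N_T|-|N_{T'}|\bigr|=1$.

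Moreover $p_T(v)$ now lies in $M_{T',v}$. It is a non-tree neighbour of $v$ placed before $u=p_{T'}(v)$, and it is not before the $(T';v)$-rightmost inversion vertex, which is the same vertex as for $T$ since $\pi$ is unchanged. Thus $M_{T',v}\neq\emptyset$. Vertices preceding $v$ are unaffected, so $v$ is still the first bad vertex of $T'$, and $T'\notin\mathscr{T}_G$.

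\emph{Step 3 (the reverse map).} If $v$ is $T$-even but $M_{T,v}\neq\emptyset$, take $u'$ to be the latest element of $M_{T,v}$ and swap $p_T(v)v$ for $u'v$. The same arguments apply here with the window now running from $u'$ to $p_T(v)$.

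The main obstacle is showing that $\pi$ is preserved in this backward direction. Now $v$ becomes available \emph{earlier}, namely at $u'$, so every vertex between $u'$ and $v$ must be smaller than $v$. This is exactly what the restriction $\pi^{-1}_T(r)\le\pi^{-1}_T(u')$, with $r$ the rightmost inversion vertex, guarantees. I would prove this carefully first.

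One then checks that the new $N_{T',v}$ equals $\{p_T(v)\}\cup N_{T,v}$ together with the elements of $M_{T,v}$ lying after $u'$. By the choice of $u'$ there are none, so $|N_{T',v}|=|N_{T,v}|+1$, which is odd, and $p_T(v)$ is its earliest element. Finally, applying the forward rule to $T'$ returns $T$, and vice versa, so $\phi_G$ is an involution on $\mathcal{T}_G\setminus\mathscr{T}_G$ changing $|N_T|$ by exactly one.
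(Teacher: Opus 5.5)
Your proposal is correct and is the same Ma--Yeh involution that the paper sketches: take the first bad vertex $v$, then swap $p_T(v)v$ for $uv$ with $u$ the earliest element of $N_{T,v}$, or for $u'v$ with $u'$ the latest element of $M_{T,v}$; you also fill in the omitted verification that $\pi_T$ and all other predecessors are preserved. The one step left implicit is that in the forward case $p_T(v)$ is the \emph{latest} element of $M_{T',v}$, which is what gives $\phi_G(T')=T$; it follows at once from the minimality of $u$ in $N_{T,v}$, since no non-tree neighbour of $v$ lies strictly between $p_T(v)$ and $u$.
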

The fact $T_{G}(1,-1) = \left| \mathscr{T}_{G} \right|$ is established by Theorem \ref{J.Ma1} in \cite{Ma}.

We now extend the concept of even-left spanning trees to spanning forests, and then prove that $T_{G}(2,-1)$ is the number of even-left spanning forests of the graph $G$, that is, Theorem \ref{thm1}.

Given a spanning forest $F$ of a graph $G$, suppose that $F$ has connected components $T_1,T_2,\dots,T_{t}$ that are ordered by their maximum vertices. Then $T_i(1\le i\le t)$ can be viewed as a spanning tree of $G_i=G[V(T_i)]$, where $G[V(T_i)]$ denotes the subgraph induced by $V(T_i)$ in $G$. For each $T_i$, we can get sequences $\pi_{T_i}$ on vertices of $G_i$ by Algorithm A. Define the sequence of $F$ from running Algorithm A of its all tree components as $\pi_F=(\pi_{T_1},\pi_{T_2},\dots,\pi_{T_t})$. Define $N_F=\bigcup_{i=1}^{t}N_{T_i}$.

\begin{definition}
	A spanning forest $F$ is even-left if its every connected component $T_{i}(1\le i\le t)$ as a spanning tree of $G_i$ is even-left.
\end{definition}
Let $\mathscr{F}_{G}$ be the set of even-left spanning forests of $G$.

To give a key lemma about the new form of Tutte polynomial, we should introduce some results in \cite{Dimitrije}. 

Let $G$ be a simple graph with $V(G) = [n]$. For any subset $U\subseteq V(G)$ and $v\in U$, define $\text{outdeg}_U(v)$ to be the cardinality of the set $\{vw\in E(G)\,|\,w\notin U\}$. A \emph{$G$-multiparking function} is a function $f : V(G) \to \mathbb{N} \cup \{\infty\}$, such that for every $U \subseteq V(G)$ either (A) $i$ is the vertex of smallest index in $U$, (written as $i = \min(U)$), and $f(i) = \infty$, or (B) there exists a vertex $i \in U$ such that $0 \leq f(v_i) < \text{outdeg}_U(i)$. If $f(v)=\infty$, we say $v$ is a root of $f$.

Kosti\'c{} and Yan \cite{Dimitrije} gave a bijection between $G$-multiparking functions and spanning forests of $G$ by two algorithms, one of which is from $G$-multiparking functions to spanning forests of $G$ and another is the inverse. The two algorithms are determined by a choice function $\gamma$. A forest $F$ of $G$ is a subgraph of $G$ without cycles. A leaf of $F$ is a vertex $v \in V(F)$ with degree $1$ in $F$. Denote the set of leaves of $F$ by $\text{Leaf}(F)$. Let $\Pi$ be the set of all ordered pairs $(F, W)$ such that $F$ is a forest of $G$, and $\emptyset \neq W \subseteq \text{Leaf}(F)$. A \emph{choice function} $\gamma$ is a function from $\Pi$ to $V(G)$ such that $\gamma(F, W) \in W$. Several choice functions were given in Section 3 of \cite{Dimitrije}. We now give one of the two algorithms which gives the inverse of the bijection.

Let $F$ be a spanning forest of $G$. Let $T_1, \dots, T_k$ be the tree components of $F$ with respective minimal vertices $r_1 = 1 < r_2 < \dots < r_k$ that are also roots of them. Fix a choice function $\gamma$. 

\medskip
\noindent\textbf{Algorithm B.}
    \begin{itemize}
        \item \textbf{Step 1.} Define a sequence $\pi = (\pi(1), \pi(2), \dots, \pi(n))$ on the vertices of $G$ as follows. First, $\pi(1) = 1$. Assuming $\pi(1), \pi(2), \dots, \pi(i)$ are determined. If there is no edge of $F$ connecting vertices in $V_i = \{\pi(1), \pi(2), \dots, \pi(i)\}$ to vertices outside $V_i$, let $\pi(i+1)$ be the vertex of smallest index not already in $V_i$. Otherwise, let $W = \{v \notin V_i : v \text{ is adjacent to some vertices in } V_i \text{ in } F\}$, and $F'$ be the forest obtained by restricting $F$ to $V_i \cup W$. Let $\pi(i+1) = \gamma(F', W)$. 
        \item \textbf{Step 2.} Set $f(r_1) = f(r_2) = \dots = f(r_k) = \infty$. For any other vertex $v$, let $s_v$ be the root in the tree containing $v$, and $v, v^p, u_1, \dots, u_t, s_v$ be the unique path from $v$ to $s_v$. Set $f(v)$ to be the cardinality of the set $\{v_j \mid \{v, v_j\} \in E(G), \pi^{-1}(v_j) < \pi^{-1}(v^p)\}$.
    \end{itemize}
\medskip

Then $f$ is a $G$-multiparking function. Notice that the sequence $\pi$ from Step 1 shall process all the vertices of a tree component before processing the vertices of other tree components. In addition, $\pi$ processes tree components by their roots in increasing order.

We give the definition of three types of edges. Let $\pi$ be the sequence obtained in Step 1 of Algorithm B. Let $R^{\gamma}_1(G;F)$ be the set of edges $vw$ of $G$ that both $v$ and $w$ are roots of $F$. Let $R^{\gamma}_2(G;F)$ be the set of edges $vw$ of $G$ that $v$ is a root and $w$ is a non-root of $F$ and $\pi^{-1}(w)<\pi^{-1}(v)$. Let $R^{\gamma}_3(G;F)$ be the set of edges $vw$ of $G$ that $v$ and $w$ are non-roots and $\pi^{-1}(v^p)<\pi^{-1}(w)<\pi^{-1}(v)$, and this implies that $v$ and $w$ must lie in the same tree of $F$. In fact, the union of these edges are $F$-redundant edges introduced in \cite{Dimitrije}. 

Fix a $G$-multiparking function $f$. Given two arbitrary choice functions $\gamma_1,\gamma_2$. From the two bijections, we have the corresponding forests $F_1,F_2$ and sequences $\pi_1,\pi_2$ obtained from Algorithm B. Kosti\'c{} and Yan  took notes on page 86 in \cite{Dimitrije} that the roots of all tree components of $F_1$ or $F_2$ are the roots of $f$, and the two tree components of $F_1$ and $F_2$ respectively that have the same root must have the same vertex set. Moreover, 
$|R^{\gamma_1}_1(G;F_1)|+|R^{\gamma_1}_2(G;F_1)|=|R^{\gamma_2}_1(G;F_2)|+|R^{\gamma_2}_2(G;F_2)|$. These imply that the number of tree components and $|R^{\gamma}_1(G;F)|+|R^{\gamma}_2(G;F)|$ remain unchanged no matter how the choice function $\gamma$ is selected. Hence, we denote the number of tree components of $f$ by $r(f)$ which is also the number of roots of $f$, and denote $|R^{\gamma}_1(G;F)|+|R^{\gamma}_2(G;F)|$ by Rec$(f)$ from \cite{Dimitrije}. 

Kosti\'c{} and Yan \cite{Dimitrije} classified the edges of $G$ by the following proposition.
\begin{proposition}\emph{\cite{Dimitrije}}\label{Kos}
    Given a simple graph $G$, a $G$-multiparking function $f$ and a choice function $\gamma$. $F$ is the corresponding spanning forest of $G$. Then
    \[
        \lvert E(G)\rvert=\sum_{v:f(v)\neq \infty}f(v)+\lvert E(F)\rvert + \text{Rec}(f)+\lvert R^{\gamma}_3(G;F)\rvert.
    \]
\end{proposition}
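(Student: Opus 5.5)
The plan is to prove the identity by showing that the four quantities on the right-hand side count the blocks of a partition of $E(G)$. Each summand $f(v)$ will be read as the number of edges $vv_j$ with $\pi^{-1}(v_j)<\pi^{-1}(v^p)$, where $\pi$ is the sequence from Step 1 of Algorithm B. I will check that every edge $e=vw$ lands in exactly one of the following:
\begin{itemize}
\item $E(F)$;
\item $R^{\gamma}_1(G;F)$;
\item $R^{\gamma}_2(G;F)$;
\item $R^{\gamma}_3(G;F)$;
\item the set counted by $f(x)$, for exactly one non-root endpoint $x$ of $e$.
\end{itemize}
Since $\mathrm{Rec}(f)=|R^{\gamma}_1|+|R^{\gamma}_2|$, summing the blocks gives the formula.

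Two structural facts about $\pi$ are needed first.
\begin{itemize}
\item \emph{Fact (a):} $\pi$ lists the components of $F$ consecutively, ordered by their roots, and the root of each component comes first within it. This holds because Step 1 moves to a new component only when no $F$-edge leaves $V_i$, and then takes the smallest unused vertex, which is the minimal vertex of the next component.
\item \emph{Fact (b):} for every non-root $x$ we have $\pi^{-1}(x^p)<\pi^{-1}(x)$, since $x$ enters $W$ only through its $F$-neighbour already in $V_i$, namely its parent toward the root.
\end{itemize}

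The case analysis for $e=vw$ then runs as follows.
\begin{enumerate}
\item If $e\in E(F)$, say $w=v^p$, it is not counted in $f(v)$ because the inequality there is strict. It is not counted in $f(w)$ because $v$ comes after $w$, hence after $w^p$. It lies in no $R_i$: not $R^{\gamma}_3$ since $w=v^p$, and not $R^{\gamma}_1$ or $R^{\gamma}_2$ by Fact (b).
\item If both endpoints are roots, $f$ equals $\infty$ at both, so $e$ lies only in $R^{\gamma}_1$.
\item If $v$ is a root and $w$ is not, exactly one of two things happens.
 \begin{itemize}
 \item If $\pi^{-1}(w)<\pi^{-1}(v)$, then $e\in R^{\gamma}_2$, and $e$ is not counted in $f(w)$ since $v$ is after $w^p$.
 \item If $\pi^{-1}(v)<\pi^{-1}(w)$, then by Fact (a) either $v$ belongs to an earlier component, or $v$ is the root of $w$'s component with $v\neq w^p$ (as $e\notin F$). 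In both cases $\pi^{-1}(v)<\pi^{-1}(w^p)$, so $e$ is counted in $f(w)$ and not in $R^{\gamma}_2$.
 \end{itemize}
\item If both endpoints are non-roots and $e\notin F$, assume without loss of generality $\pi^{-1}(w)<\pi^{-1}(v)$. Then $e$ is never counted in $f(w)$, since $v$ is after $w$, which is after $w^p$. Moreover $w\neq v^p$, so exactly one of $\pi^{-1}(w)<\pi^{-1}(v^p)$ (counted in $f(v)$) or $\pi^{-1}(v^p)<\pi^{-1}(w)<\pi^{-1}(v)$ (in $R^{\gamma}_3$) holds. The reversed $R^{\gamma}_3$ condition with the roles of $v,w$ swapped fails because $w$ precedes $v$.
\end{enumerate}

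The only delicate point is confirming Facts (a) and (b) from Step 1 of Algorithm B. The key difficulty is showing that a root is placed before the rest of its tree, and that a vertex's parent precedes it regardless of the choice function $\gamma$. This needs a short induction on $i$: the restricted forest $F'$ on $V_i\cup W$ always has $W$ as leaves attached to $V_i$, so $\gamma$ only ever chooses a vertex adjacent in $F$ to an already chosen one, and that neighbour is its parent. Once these facts hold, the remaining case check above is routine.
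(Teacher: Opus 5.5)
\textbf{The proposal is correct.} The paper gives no argument for this proposition: it is quoted from Kosti\'c and Yan \cite{Dimitrije} and used only as a black box, to convert the forest expansion of $T_G(1+x,y)$ into the multiparking expansion. So your edge-by-edge classification is a self-contained proof rather than a variant of anything in the text.

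I checked the four cases. Every edge of $G$ lands in exactly one of $E(F)$, $R^{\gamma}_1(G;F)$, $R^{\gamma}_2(G;F)$, $R^{\gamma}_3(G;F)$, or the neighbour set counted by $f(x)$ for exactly one non-root endpoint $x$. Facts (a) and (b) follow from the induction you sketch: at every stage, $V_i$ is $V(T_1)\cup\cdots\cup V(T_{m-1})$ plus a subtree of $T_m$ containing $r_m$. The same invariant shows two further things. Each $w\in W$ is a leaf of $F'$, so $\gamma(F',W)$ is defined. And the unique $F$-neighbour of $w$ in $V_i$ is $w^p$.

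Your route makes clear that the identity is a pure partition of $E(G)$. It holds for every spanning forest $F$ and every choice function $\gamma$, with $f$ defined by Step 2 of Algorithm B. It never uses the multiparking property of $f$.

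You should state the one external input explicitly. For the given $f$, the ``corresponding'' forest $F$ must be exactly the forest on which Step 2 of Algorithm B, run with the same $\gamma$, returns $f$; that is, Algorithm B inverts the forward map of \cite{Dimitrije}. This is what licenses reading each $f(v)$ as the number of neighbours $v_j$ with $\pi^{-1}(v_j)<\pi^{-1}(v^p)$. Simplicity of $G$ is what lets you count these neighbours as edges.

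Two small tidy-ups:
\begin{itemize}
\item In Case 1, when $w=v^p$ is a root there is nothing to check for $f(w)$, since $f(w)=\infty$. Also, the swapped orientation of the $R^{\gamma}_3$ condition fails because $v$ follows $w$ in $\pi$.
\item In Case 3, say explicitly that $e\notin E(F)$, since the root--child edges are already handled in Case 1.
\end{itemize}
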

Kosti\'c{} and Yan gave a form of Tutte polynomial by selecting a special choice function $\gamma_{b}$, that is, 
\[
    T_{G}(1+x,y)=\sum_{F\in \mathcal{F}_G}x^{c(F)-1}y^{\lvert R^{\gamma_b}_{3}(G;F)\vert}
\]
where the sum is over all spanning forests $F$ of $G$. Using Proposition \ref{Kos}, we have 
\[
    T_{G}(1+x,y)=\sum_{f}x^{r(f)-1}y^{\lvert E(G)\rvert-n+r(f)-\text{Rec}(f)-(\sum_{v:f(v)\neq \infty}f(v))}
\]
where the sum is over all $G$-multiparking functions $f$. Note that the exponents on $x$ and $y$ are independent of the choice function $\gamma_{b}$. 
\begin{proposition}
    Given a simple graph $G$ and a choice function $\gamma$. Then
    \[
        T_{G}(1+x,y)=\sum_{F\in \mathcal{F}_G}x^{c(F)-1}y^{\lvert R^{\gamma}_{3}(G;F)\vert}.
    \]
\end{proposition}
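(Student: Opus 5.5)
The plan is to move the known formula for the special choice function $\gamma_b$ over to an arbitrary $\gamma$. The bridge is the set of $G$-multiparking functions, whose statistics $r(f)$ and $\text{Rec}(f)$ do not depend on the choice function. Everything needed is already stated: the Kosti\'c{}--Yan formula for $\gamma_b$, their bijection $\Phi_\gamma$ between $G$-multiparking functions and spanning forests for each $\gamma$, and Proposition \ref{Kos}.

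\textbf{Step 1.} Start from the expression obtained above via Proposition \ref{Kos} for $\gamma_b$:
\[
T_{G}(1+x,y)=\sum_{f}x^{r(f)-1}y^{\lvert E(G)\rvert-n+r(f)-\text{Rec}(f)-\sum_{v:f(v)\neq\infty}f(v)}.
\]
Here the sum runs over all $G$-multiparking functions $f$. The point to record is that every quantity in the exponents is intrinsic to $f$. The values $f(v)$ are data of $f$. The number $r(f)$ is the number of roots of $f$. By the remarks of Kosti\'c{} and Yan recalled before Proposition \ref{Kos}, $\text{Rec}(f)$ is independent of the choice function.

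\textbf{Step 2.} Fix an arbitrary choice function $\gamma$ and reindex the sum by spanning forests via the bijection $f\mapsto F=\Phi_\gamma(f)$. For each $f$ with $F=\Phi_\gamma(f)$, check two identities.

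\begin{itemize}
\item The roots of $f$ are exactly the roots of the components of $F$, so $r(f)=c(F)$.
\item $F$ is a spanning forest on $n$ vertices with $c(F)$ components, so $\lvert E(F)\rvert=n-c(F)$. Substituting this into Proposition \ref{Kos}, applied with the same $\gamma$, gives
\[
\lvert R^{\gamma}_3(G;F)\rvert=\lvert E(G)\rvert-n+c(F)-\text{Rec}(f)-\sum_{v:f(v)\neq\infty}f(v).
\]
\end{itemize}

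The right-hand side is precisely the exponent of $y$ from Step 1. Hence the term for $f$ equals $x^{c(F)-1}y^{\lvert R^\gamma_3(G;F)\rvert}$. Since $\Phi_\gamma$ is a bijection onto $\mathcal{F}_G$, summing over $f$ is the same as summing over $F\in\mathcal{F}_G$, and the claimed formula follows.

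\textbf{Main obstacle.} The only delicate point is justifying that Proposition \ref{Kos} holds for every $\gamma$, and that $\text{Rec}(f)$ and $r(f)$ are truly $\gamma$-independent. Both are taken from \cite{Dimitrije} as stated earlier in the paper. If one wanted to verify the first directly, the approach would be to partition $E(G)$ using the sequence $\pi$ from Algorithm B. The non-tree edges at each non-root vertex $v$ split into three groups: edges to vertices $v_j$ with $\pi^{-1}(v_j)<\pi^{-1}(v^p)$, which are counted by $f(v)$; edges in the window between $v^p$ and $v$, which form $R_3$; and the remaining edges touching roots, which form $R_1\cup R_2$. Everything else in the argument is bookkeeping.
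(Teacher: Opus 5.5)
Your proof is correct and follows the paper's route. The paper rewrites the Kosti\'c--Yan $\gamma_b$ formula as a sum over $G$-multiparking functions using Proposition~\ref{Kos}, notes that the exponents $r(f)-1$ and $\lvert E(G)\rvert-n+r(f)-\text{Rec}(f)-\sum_{v:f(v)\neq\infty}f(v)$ do not depend on the choice function, and reads the sum back through the bijection for an arbitrary $\gamma$; you make that last back-substitution explicit via $r(f)=c(F)$ and $\lvert E(F)\rvert=n-c(F)$, which the paper leaves implicit.
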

We choose another choice function from Section 3 of \cite{Dimitrije}. Given a spanning forest $F$ of a simple connected graph $G$ with $V(G)=[n]$ and a vertex ranking $\sigma\in S_n$, Kosti\'c{} and Yan \cite{Dimitrije} implied that choice functions can be defined by $\gamma_{\sigma}(F,W)=v$, where $v$ is the vertex in $W$ with minimal ranking. We select a vertex ranking $\tau$. In this ranking, the vertex labeled $i$ ranks $n+1-i$. Note that $\gamma_{\tau}(F,W)=v$ where $v$ is the vertex in $W$ with the maximal label.
Then we run Algorithm B with the spanning forest $F$ and the choice function $\gamma_{\tau}$ to obtain the sequence $\pi$ and $R^{\gamma_{\tau}}_3(G;F)$. We relabel the vertex in $G$ by changing the label $i$ to its ranking $n+1-i$ for $1\le i\le n$. Then the sequence $\pi$ is changed to $\pi'$ satisfying $\pi'(i)=n+1-\pi(i)$ for $1\le i\le n$. Now run Algorithm A with all tree components of $F$ and we obtain the sequence $\pi_F$ and $N_F$. Note that $\pi_F=\pi'$ and $\lvert R^{\gamma_{\tau}}_3(G;F)\rvert=\lvert N_F\rvert$. Therefore, we can give the key lemma.

\begin{lemma}\label{Dimi}
	$T_{G}(2,y)=\sum_{F\in \mathcal{F}_G}y^{\lvert N_F\rvert}$.
\end{lemma}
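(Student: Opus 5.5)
The plan is to specialize the unnumbered Proposition preceding the lemma (the Kosti\'c--Yan expansion valid for any choice function $\gamma$) to $x=1$ and to the choice function $\gamma_{\tau}$, and then to identify $R^{\gamma_\tau}_3(G;F)$ with $N_F$ for every spanning forest $F$. With $x=1$ the factor $x^{c(F)-1}$ becomes $1$, so
\[
T_G(2,y)=\sum_{F\in\mathcal{F}_G} y^{|R^{\gamma_\tau}_3(G;F)|}.
\]
Hence it suffices to show $|R^{\gamma_\tau}_3(G;F)|=|N_F|$ for each $F$. I would prove this by exhibiting an explicit edge-by-edge bijection after relabelling.

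\textbf{Step 1: compare the sequences.} Relabel each vertex $i$ as $n+1-i$. Under the new labels:
\begin{itemize}
\item the roots of $F$ in Algorithm B (the minimal old labels) become the maximal new labels of the components;
\item Algorithm B processes components in increasing order of their minimal old label, i.e.\ in decreasing order of new maximum;
\item within a component, Algorithm B repeatedly takes, among the current frontier $W$, the vertex of maximal old label, i.e.\ minimal new label. This is exactly Step 2 of Algorithm A.
\end{itemize}
So within each tree the order agrees with $\pi_{T_i}$. For the order of components I would check the paper's convention for ordering the $T_i$ ("ordered by their maximum vertices"). The order of components is in any case irrelevant for $R_3$, because edges in $R_3$ join two vertices of the same tree. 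All that is needed is that restricting $\pi'$ to each component yields $\pi_{T_i}$.

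\textbf{Step 2: match the edges.} Within a component $T_i$, the root is $\pi_{T_i}(1)$, and the parent $v^p$ of a non-root $v$ on the path to the root is exactly the predecessor $p_{T_i}(v)$. The definition of $R_3$ requires $\pi^{-1}(v^p)<\pi^{-1}(w)<\pi^{-1}(v)$ with $vw\in E(G)$. Two points need checking here:
\begin{itemize}
\item \emph{$vw\notin E(F)$.} If $vw$ were a tree edge with $w$ strictly between $v^p$ and $v$, then either $w$ is the parent of $v$, which is excluded, or $v$ is the parent of $w$, forcing $w$ after $v$. So tree edges never qualify.
\item \emph{$w$ lies in $T_i$.} Since $w$ lies strictly between two vertices of $T_i$ in the sequence, and each component occupies a contiguous block of $\pi$, $w$ belongs to $T_i$. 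This matches $N_{T_i,v}$, which is computed in $G_i=G[V(T_i)]$.
\end{itemize}
Hence the set of $w$ with $vw\in R_3$ equals $N_{T_i,v}$.

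\textbf{Step 3: count edges once.} An unordered edge cannot be counted from both endpoints: if $w$ lies between $v^p$ and $v$, then $w$ precedes $v$, so $v$ cannot lie before $w$ in the interval for $w$. Thus $R_3$ corresponds exactly to $N_F=\bigcup_i N_{T_i}$, as sets of edges, and $|R^{\gamma_\tau}_3(G;F)|=|N_F|$, which proves the lemma.

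The main obstacle is Step 1: verifying carefully that Algorithm B with $\gamma_\tau$, after relabelling, reproduces Algorithm A component by component. In particular, the frontier set $W$ in Algorithm B must coincide with the set of vertices adjacent in $T$ to $U_i$, and the "no outgoing edge, start a new component" rule must match the block structure of $\pi_F$.
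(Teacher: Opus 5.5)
Your proposal is correct and follows the paper's own route. You specialize the choice-function-independent Kosti\'c--Yan expansion to $x=1$ with $\gamma_{\tau}$, relabel $i\mapsto n+1-i$ so that the Algorithm B sequence becomes $\pi_F$ (up to the irrelevant order of components), and conclude $|R^{\gamma_{\tau}}_3(G;F)|=|N_F|$; your Steps 2 and 3 spell out the details (parent equals predecessor, tree edges never qualify, contiguity of component blocks, no edge counted from both ends) that the paper asserts in one line with ``Note that''.
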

\begin{lemma}\label{lemma1}
	There exists an involution $\psi_{G}$ from $\mathcal{F}_{G} \backslash \mathscr{F}_{G}$ to itself such that $$\left| \lvert N_F\rvert - \lvert N_{\psi_{G}(F)} \rvert \right| = 1$$ for any $F\in \mathcal{F}_{G} \backslash \mathscr{F}_{G}$.
\end{lemma}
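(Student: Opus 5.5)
We build $\psi_G$ componentwise from the involutions $\phi_{G_i}$ of Theorem \ref{J.Ma1}. Let $F\in\mathcal{F}_G\backslash\mathscr{F}_G$ have tree components $T_1,\dots,T_t$, ordered by their maximum vertices, and set $G_i=G[V(T_i)]$. By definition, some $T_i$ is not an even-left spanning tree of $G_i$. Let $j$ be the smallest such index, so $T_j\in\mathcal{T}_{G_j}\backslash\mathscr{T}_{G_j}$. Define $\psi_G(F)$ to be the forest obtained from $F$ by replacing $T_j$ with $\phi_{G_j}(T_j)$ and keeping every other component unchanged.

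\textbf{Step 1: $\psi_G(F)$ is a spanning forest with the same component structure.} Since $\phi_{G_j}(T_j)$ is a spanning tree of $G_j$, it has the same vertex set as $T_j$. Hence $\psi_G(F)$ is a spanning forest of $G$ with the same vertex partition as $F$, and therefore the same ordering of components by maximum vertices. Its $j$-th component $\phi_{G_j}(T_j)$ is not even-left, because $\phi_{G_j}$ maps $\mathcal{T}_{G_j}\backslash\mathscr{T}_{G_j}$ into itself. Its components with index less than $j$ are unchanged and hence still even-left. Consequently $\psi_G(F)\in\mathcal{F}_G\backslash\mathscr{F}_G$, and $j$ is again the smallest index of a non-even-left component.

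\textbf{Step 2: $\psi_G$ is an involution.} By Step 1, applying $\psi_G$ to $\psi_G(F)$ selects the same index $j$ and replaces $\phi_{G_j}(T_j)$ by $\phi_{G_j}(\phi_{G_j}(T_j))=T_j$. Thus $\psi_G(\psi_G(F))=F$.

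\textbf{Step 3: the parity change of $|N_F|$.} First, each $N_{T_i}$ is computed inside $G_i$ using only the edges of $G_i$, and depends only on $T_i$ and $G_i$. Second, the edge sets $N_{T_i}$ for different $i$ are disjoint, since they lie in different induced subgraphs. Hence
$$|N_F|=\sum_i|N_{T_i}|,$$
and
$$\bigl||N_F|-|N_{\psi_G(F)}|\bigr|=\bigl||N_{T_j}|-|N_{\phi_{G_j}(T_j)}|\bigr|=1$$
by Theorem \ref{J.Ma1}.

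The main point to check carefully is that Theorem \ref{J.Ma1} applies to each $G_j$. This requires $G_j$ to be simple and connected. It is simple because it is an induced subgraph of the simple graph $G$. It is connected because it contains the spanning tree $T_j$. Algorithm A on $G_j$ uses the maximum vertex of $V(T_j)$ and the induced order on that set, which matches the definition of $\pi_F$. The second point to check is that the ordering of components, and hence the choice of $j$, is preserved under $\psi_G$; this follows from Step 1, since the vertex sets of the components are unchanged.
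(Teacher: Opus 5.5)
Your proposal is correct and is essentially the paper's proof: both apply the Ma--Yeh involution $\phi_{G_j}$ to the first non-even-left component (components ordered by maximum vertex) and use $|N_F|=\sum_i |N_{T_i}|$. Your extra checks, namely that $G_j$ is simple and connected, that the $N_{T_i}$ are disjoint, and that the component vertex sets (hence the choice of $j$) are unchanged, make explicit details the paper leaves implicit.
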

\begin{proof}
	Given $F\in \mathcal{F}_{G} \backslash \mathscr{F}_{G}$ with connected components $T_1,T_2,\dots,T_{t}$ that are ordered by their maximum vertices. Suppose $T_{m}$ is the first tree which does not satisfy even-left. Let $\psi_{G}(F) = T_1\cup T_2\cup \cdots \cup T_{m-1}\cup\phi_{G_m}(T_m)\cup\cdots\cup T_t$ where $\phi_{G_m}$ is the involution introduced in Theorem \ref{J.Ma1}. Hence, $\phi_{G_m}(T_m)$ is not even-left and $\phi_{G_m}(\phi_{G_m}(T_m))=T_{m}$ with $\left| \lvert N_{T_m}\rvert - \lvert N_{\phi_{G_m}(T_m)}\rvert \right| = 1$. Then $\phi_{G_m}(T_m)$ is the first tree of $\psi_{G}(F)$ which does not satisfy even-left and $\psi_{G}(\psi_{G}(F))=F$. Note that
	\begin{align*}
		\left| \lvert N_F\rvert - \lvert N_{\psi_{G}(F)}\rvert \right|=&\left|  \sum_{i=1}^{t} \lvert N_{T_i}\rvert -\left (\sum_{i=1}^{m-1}\lvert N_{T_i}\rvert + \lvert N_{\phi_{G_m}(T_m)} \rvert +\sum_{i=m+1}^{t}\lvert N_{T_i}\rvert \right)  \right| \\
		=&\left| \lvert N_{T_m}\rvert - \lvert N_{\phi_{G_m}(T_m)}\rvert \right| =1\\
	\end{align*} 
This completes the proof.
\end{proof}
\begin{figure}[ht]
    \centering
    \begin{tikzpicture}
        \node (4) at (0,1.2) [circle, fill, inner sep=2pt, label=above:4] {};
        \node (3) at (1.2,1.2) [circle, fill, inner sep=2pt, label=above:3] {};
        \node (1) at (0,0) [circle, fill, inner sep=2pt, label=below:1] {};
        \node (2) at (1.2,0) [circle, fill, inner sep=2pt, label=below:2] {};
        \draw (4) -- (3);
        \draw (3) -- (2);
        \draw (2) -- (1);
        \draw (1) -- (4);
        \draw (1) -- (3);
    \end{tikzpicture}
    \caption{ $G$}
    \label{fig:graphG}
\end{figure}

\begin{figure}[ht]
\centering
\begin{tikzpicture}
\foreach \x/\y/\label in {0/0/1, 0.8/0/2, 0/0.8/4, 0.8/0.8/3} {
    \fill (\x,\y) circle (1.2pt);
    \node[below right=-2pt] at (\x,\y) {$\label$};
}
\node[below] at (0.6,-0.4) {$F_1$};
\end{tikzpicture}
\qquad
\begin{tikzpicture}
\foreach \x/\y/\label in {0/0/1, 0.8/0/2, 0/0.8/4, 0.8/0.8/3} {
    \fill (\x,\y) circle (1.2pt);
    \node[below right=-2pt] at (\x,\y) {$\label$};
}
\draw (0,0) -- (0.8,0);
\node[below] at (0.6,-0.4) {$F_2$};
\end{tikzpicture}
\qquad
\begin{tikzpicture}
\foreach \x/\y/\label in {0/0/1, 0.8/0/2, 0/0.8/4, 0.8/0.8/3} {
    \fill (\x,\y) circle (1.2pt);
    \node[below right=-2pt] at (\x,\y) {$\label$};
}
\draw (0.8,0) -- (0.8,0.8);
\node[below] at (0.6,-0.4) {$F_3$};
\end{tikzpicture}
\qquad
\begin{tikzpicture}
\foreach \x/\y/\label in {0/0/1, 0.8/0/2, 0/0.8/4, 0.8/0.8/3} {
    \fill (\x,\y) circle (1.2pt);
    \node[below right=-2pt] at (\x,\y) {$\label$};
}
\draw (0,0.8) -- (0.8,0.8);
\node[below] at (0.6,-0.4) {$F_4$};
\end{tikzpicture}
\qquad
\begin{tikzpicture}
\foreach \x/\y/\label in {0/0/1, 0.8/0/2, 0/0.8/4, 0.8/0.8/3} {
    \fill (\x,\y) circle (1.2pt);
   \node[below right=-2pt] at (\x,\y) {$\label$};
}
\draw (0,0) -- (0,0.8);
\node[below] at (0.6,-0.4) {$F_5$};
\end{tikzpicture}
\qquad
\begin{tikzpicture}
\foreach \x/\y/\label in {0/0/1, 0.8/0/2, 0/0.8/4, 0.8/0.8/3} {
    \fill (\x,\y) circle (1.2pt);
    \node[below right=-2pt] at (\x,\y) {$\label$};
}
\draw (0,0) -- (0.8,0.8);
\node[below] at (0.6,-0.4) {$F_6$};
\end{tikzpicture}
\vspace{1em}

\begin{tikzpicture}
\foreach \x/\y/\label in {0/0/1, 0.8/0/2, 0/0.8/4, 0.8/0.8/3} {
    \fill (\x,\y) circle (1.2pt);
   \node[below right=-2pt] at (\x,\y) {$\label$};
}
\draw (0,0) -- (0.8,0);
\draw (0.8,0) -- (0.8,0.8);
\node[below] at (0.6,-0.4) {$F_7$};
\end{tikzpicture}
\qquad
\begin{tikzpicture}
\foreach \x/\y/\label in {0/0/1, 0.8/0/2, 0/0.8/4, 0.8/0.8/3} {
    \fill (\x,\y) circle (1.2pt);
    \node[below right=-2pt] at (\x,\y) {$\label$};
}
\draw (0,0.8) -- (0.8,0.8);
\draw (0.8,0.8) -- (0.8,0);
\node[below] at (0.6,-0.4) {$F_8$};
\end{tikzpicture}
\qquad
\begin{tikzpicture}
\foreach \x/\y/\label in {0/0/1, 0.8/0/2, 0/0.8/4, 0.8/0.8/3} {
    \fill (\x,\y) circle (1.2pt);
   \node[below right=-2pt] at (\x,\y) {$\label$};
}
\draw (0,0) -- (0.8,0);
\draw (0,0) -- (0,0.8);
\node[below] at (0.6,-0.4) {$F_{9}$};
\end{tikzpicture}
\qquad
\begin{tikzpicture}
\foreach \x/\y/\label in {0/0/1, 0.8/0/2, 0/0.8/4, 0.8/0.8/3} {
    \fill (\x,\y) circle (1.2pt);
    \node[below right=-2pt] at (\x,\y) {$\label$};
}
\draw (0,0) -- (0.8,0);
\draw (0,0.8) -- (0.8,0.8);
\node[below] at (0.6,-0.4) {$F_{10}$};
\end{tikzpicture}
\qquad
\begin{tikzpicture}
\foreach \x/\y/\label in {0/0/1, 0.8/0/2, 0/0.8/4, 0.8/0.8/3} {
    \fill (\x,\y) circle (1.2pt);
    \node[below right=-2pt] at (\x,\y) {$\label$};
}
\draw (0,0) -- (0,0.8);
\draw (0.8,0) -- (0.8,0.8);
\node[below] at (0.6,-0.4) {$F_{11}$};
\end{tikzpicture}
\qquad
\begin{tikzpicture}
\foreach \x/\y/\label in {0/0/1, 0.8/0/2, 0/0.8/4, 0.8/0.8/3} {
    \fill (\x,\y) circle (1.2pt);
    \node[below right=-2pt] at (\x,\y) {$\label$};
}
\draw (0,0) -- (0.8,0.8);
\draw (0,0.8) -- (0.8,0.8);
\node[below] at (0.6,-0.4) {$F_{12}$};
\end{tikzpicture}
\vspace{1em}

\begin{tikzpicture}
\foreach \x/\y/\label in {0/0/1, 0.8/0/2, 0/0.8/4, 0.8/0.8/3} {
    \fill[red] (\x,\y) circle (1.2pt);
   \node[below right=-2pt, text=red] at (\x,\y) {$\label$};
}
\draw[red] (0,0) -- (0,0.8);
\draw[red] (0,0.8) -- (0.8,0.8);
\node[below] at (0.6,-0.4) {$F_{13}$};
\end{tikzpicture}
\qquad
\begin{tikzpicture}
\foreach \x/\y/\label in {0/0/1, 0.8/0/2, 0/0.8/4, 0.8/0.8/3} {
    \fill[green] (\x,\y) circle (1.2pt);
   \node[below right=-2pt, text=green] at (\x,\y) {$\label$};
}
\draw[green] (0,0) -- (0.8,0.8);
\draw[green] (0,0) -- (0,0.8);
\node[below] at (0.6,-0.4) {$F_{14}$};
\end{tikzpicture}
\qquad
\begin{tikzpicture}
\foreach \x/\y/\label in {0/0/1, 0.8/0/2, 0/0.8/4, 0.8/0.8/3} {
    \fill[red] (\x,\y) circle (1.2pt);
    \node[below right=-2pt, text=red] at (\x,\y) {$\label$};
}
\draw[red] (0,0) -- (0.8,0.8);
\draw[red] (0.8,0) -- (0.8,0.8);
\node[below] at (0.6,-0.4) {$F_{15}$};
\end{tikzpicture}
\qquad
\begin{tikzpicture}
\foreach \x/\y/\label in {0/0/1, 0.8/0/2, 0/0.8/4, 0.8/0.8/3} {
    \fill[green] (\x,\y) circle (1.2pt);
    \node[below right=-2pt, text=green] at (\x,\y) {$\label$};
}
\draw[green] (0,0) -- (0.8,0.8);
\draw[green] (0,0) -- (0.8,0);
\node[below] at (0.6,-0.4) {$F_{16}$};
\end{tikzpicture}
\qquad
\begin{tikzpicture}
\foreach \x/\y/\label in {0/0/1, 0.8/0/2, 0/0.8/4, 0.8/0.8/3} {
    \fill[red] (\x,\y) circle (1.2pt);
    \node[below right=-2pt, text=red] at (\x,\y) {$\label$};
}
\draw[red] (0,0) -- (0,0.8);
\draw[red] (0,0.8) -- (0.8,0.8);
\draw[red] (0.8,0) -- (0.8,0.8);
\node[below] at (0.6,-0.4) {$F_{17}$};
\end{tikzpicture}
\qquad
\begin{tikzpicture}
\foreach \x/\y/\label in {0/0/1, 0.8/0/2, 0/0.8/4, 0.8/0.8/3} {
    \fill[green] (\x,\y) circle (1.2pt);
    \node[below right=-2pt, text=green] at (\x,\y) {$\label$};
}
\draw[green] (0,0) -- (0,0.8);
\draw[green] (0,0) -- (0.8,0.8);
\draw[green] (0.8,0) -- (0.8,0.8);
\node[below] at (0.6,-0.4) {$F_{18}$};
\end{tikzpicture}
\vspace{1em}

\begin{tikzpicture}
\foreach \x/\y/\label in {0/0/1, 0.8/0/2, 0/0.8/4, 0.8/0.8/3} {
    \fill[red] (\x,\y) circle (1.2pt);
    \node[below right=-2pt, text=red] at (\x,\y) {$\label$};
}
\draw[red] (0,0) -- (0,0.8);
\draw[red] (0,0) -- (0.8,0.8);
\draw[red] (0,0) -- (0.8,0);
\node[below] at (0.6,-0.4) {$F_{19}$};
\end{tikzpicture}
\qquad
\begin{tikzpicture}
\foreach \x/\y/\label in {0/0/1, 0.8/0/2, 0/0.8/4, 0.8/0.8/3} {
    \fill[green] (\x,\y) circle (1.2pt);
    \node[below right=-2pt, text=green] at (\x,\y) {$\label$};
}
\draw[green] (0,0) -- (0,0.8);
\draw[green] (0,0) -- (0.8,0);
\draw[green] (0.8,0) -- (0.8,0.8);
\node[below] at (0.6,-0.4) {$F_{20}$};
\end{tikzpicture}
\qquad
\begin{tikzpicture}
\foreach \x/\y/\label in {0/0/1, 0.8/0/2, 0/0.8/4, 0.8/0.8/3} {
    \fill[red] (\x,\y) circle (1.2pt);
    \node[below right=-2pt, text=red] at (\x,\y) {$\label$};
}
\draw[red] (0,0.8) -- (0.8,0.8);
\draw[red] (0,0) -- (0.8,0.8);
\draw[red] (0.8,0) -- (0.8,0.8);
\node[below] at (0.6,-0.4) {$F_{21}$};
\end{tikzpicture}
\qquad
\begin{tikzpicture}
\foreach \x/\y/\label in {0/0/1, 0.8/0/2, 0/0.8/4, 0.8/0.8/3} {
    \fill[green] (\x,\y) circle (1.2pt);
   \node[below right=-2pt, text=green] at (\x,\y) {$\label$};
}
\draw[green] (0,0) -- (0.8,0);
\draw[green] (0,0) -- (0.8,0.8);
\draw[green] (0,0.8) -- (0.8,0.8);
\node[below] at (0.6,-0.4) {$F_{22}$};
\end{tikzpicture}
\qquad
\begin{tikzpicture}
\foreach \x/\y/\label in {0/0/1, 0.8/0/2, 0/0.8/4, 0.8/0.8/3} {
    \fill (\x,\y) circle (1.2pt);
    \node[below right=-2pt] at (\x,\y) {$\label$};
}
\draw (0,0) -- (0.8,0);
\draw (0,0.8) -- (0.8,0.8);
\draw (0.8,0) -- (0.8,0.8);
\node[below] at (0.6,-0.4) {$F_{23}$};
\end{tikzpicture}
\qquad
\begin{tikzpicture}
\foreach \x/\y/\label in {0/0/1, 0.8/0/2, 0/0.8/4, 0.8/0.8/3} {
    \fill (\x,\y) circle (1.2pt);
    \node[below right=-2pt] at (\x,\y) {$\label$};
}
\draw (0,0) -- (0,0.8);
\draw (0,0) -- (0.8,0);
\draw (0,0.8) -- (0.8,0.8);
\node[below] at (0.6,-0.4) {$F_{24}$};
\end{tikzpicture}

\caption{All the spanning forests of \( G \) (Black forests are even-left, while red and green forests are not and each adjacent pair is the two sides of the involution $\psi_G$).}
\label{fig:spanning_forests}
\end{figure}
We now present the proof of Theorem \ref{thm1}.

	\textit{Proof of Theorem \ref{thm1}.}  Note that $\left| N_{F} \right|$ is even when $F\in \mathscr{F}_{G}$. Therefore, by Lemmas \ref{Dimi} and \ref{lemma1}, we have
	\begin{align*}
		T_{G}(2,-1) &= \sum_{F\in \mathcal{F}_{G}}(-1)^{\lvert N_F\rvert}\\
		&=\sum_{F\in \mathscr{F}_{G}}(-1)^{\lvert N_F\rvert} + \sum_{F\in \mathcal{F}_{G} \backslash \mathscr{F}_{G}}(-1)^{\lvert N_F\rvert}\\
		&=\sum_{F\in \mathscr{F}_{G}}(-1)^{\lvert N_F\rvert} = \left| \mathscr{F}_{G} \right|.
	\end{align*}
We complete the proof. \qed
\begin{example}
	Let $G$ be shown in Figure \ref{fig:graphG}. We list all the spanning forests of $G$ in Figure \ref{fig:spanning_forests}. Among them, $F_{i}$ with $13\le i\le 22$ are not even-left spanning forests, while all the others are even-left. Moreover, we have $\psi_{G}(F_{13})=F_{14},\ \psi_{G}(F_{15})=F_{16},\ \psi_{G}(F_{17})=F_{18},\ \psi_{G}(F_{19})=F_{20},\ \psi_{G}(F_{21})=F_{22}$ and $T_{G}(2,-1)=14$. These verify Theorem \ref{thm1}.
\end{example}

\section{Odd $G$-partitionable permutations}
When working with permutations of a finite ordered set $(X,<)$, it is often convenient to record a permutation $\sigma$ simply by listing its images in the natural order of the domain. Following the convention of one-line notation for $S_n$, we write permutations of any finite ordered set $X = \{x_1 < \cdots < x_n\}$ as $\sigma(x_1)\sigma(x_2)\cdots\sigma(x_n)$.
This is merely the second line of the two-line notation with the first line understood from the ordering of $X$. For instance, if $X = \{a < b < c\}$ and $\sigma$ sends $a \mapsto c$, $b \mapsto a$, $c \mapsto b$, we write simply $c a b$. When $X = \{1,2,\ldots,n\}$ with the usual order, this reduces to the familiar one-line notation in $S_n$.

We first recall the definition of odd $G$-permutations. Allowing $G$ to be disconnected, given a simple graph $G$ 
and a permutation $$\sigma=a_1a_2\cdots a_n$$ on $V(G)$ such that $a_1$ is the maximum vertex of $G$. We associate $\sigma$ with a spanning forest $F_{\sigma}$ of $G$ by the following algorithm:

\medskip
\noindent\textbf{Algorithm C.}
    \begin{itemize}

	    \item \textbf{Step 1.} Let $E_1 = \emptyset$. 
	    \item \textbf{Step 2.} At time $i \geq 2$, let $j = \max\{k \mid 1 \leq k < i,\ a_k > a_i\}$. Define $\tilde{N}_{\sigma,i}$ as the set of indexes $k$ such that $j \leq k < i$ and $a_k a_i \in E(G)$. If $\tilde{N}_{\sigma,i} = \emptyset$, then let $E_i = E_{i-1}$; otherwise let $E_i = E_{i-1} \cup \{a_m a_i\}$, where $m = \min \tilde{N}_{\sigma,i}$.
    \end{itemize}
\medskip

We can get a set $E_n$ by iterating Step 2 until $i=n$, which induces the spanning forest $F_{\sigma}$. If $F_{\sigma}$ is a spanning tree (in this case, $G$ must be connected) and $\tilde{N}_{\sigma,i}$ is odd for $2\le i\le n$, we say the permutation $\sigma$ to be an \textit{odd $G$-permutation}, and denote the set of odd $G$-permutations of $G$ by $S_{G,odd}$.
\begin{theorem}\emph{\cite{Ma}}\label{J.Ma2}
	$\theta_{G}(\sigma)=F_\sigma$ is a bijection from $S_{G,odd}$ to $\mathscr{T}_{G}$.
\end{theorem}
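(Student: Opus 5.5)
The plan is to show that $\theta_G$ and the map $T\mapsto \pi_T$ of Algorithm A are mutually inverse between $S_{G,odd}$ and $\mathscr{T}_{G}$. Both directions rest on the observation below, which links the index set $\tilde N_{\sigma,i}$ of Algorithm C to the sets $N_{T,v}$ and $M_{T,v}$ of Algorithm A. Throughout, "$\tilde N_{\sigma,i}$ is odd" is read as $|\tilde N_{\sigma,i}|$ odd.

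\textbf{Key observation.} Suppose $\sigma=\pi_T$ for a spanning tree $T$, and let $v=a_i$ with $i\ge 2$. The index $j=\max\{k<i: a_k>a_i\}$ is exactly the position of the $(T;v)$-rightmost inversion vertex $r$. Recall that $\pi_T^{-1}(r)\le \pi_T^{-1}(p_T(v))$. The vertices $a_k$ with $j\le k<i$ that are $G$-adjacent to $v$ therefore split into three disjoint groups:
\begin{itemize}
\item the predecessor $p_T(v)$;
\item the non-tree neighbours strictly before $p_T(v)$, which form $M_{T,v}$;
\item the non-tree neighbours strictly between $p_T(v)$ and $v$, which form $N_{T,v}$.
\end{itemize}
No other tree neighbour of $v$ precedes $v$. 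Hence
\[
|\tilde N_{\pi_T,i}|=1+|M_{T,v}|+|N_{T,v}|.
\]
Moreover, $\min\tilde N_{\pi_T,i}$ is the position of $p_T(v)$ exactly when $M_{T,v}=\emptyset$.

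\textbf{Step 1: $\theta_G$ is well defined.} Let $\sigma=a_1\cdots a_n\in S_{G,odd}$. Since $a_1$ is the maximum vertex, $j$ always exists. Since each $|\tilde N_{\sigma,i}|$ is odd, it is nonempty, so every $a_i$ with $i\ge 2$ receives exactly one edge to an earlier vertex. Thus $F_\sigma$ has $n-1$ edges, and it is acyclic because each new vertex attaches by a single edge. So $F_\sigma$ is a spanning tree $T$, in which every edge joins $a_i$ to its unique earlier neighbour, its "parent".

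\textbf{Step 2: $\sigma=\pi_T$.} This is the main point. We show by induction that $a_i$ is the minimum vertex outside $U_{i-1}=\{a_1,\dots,a_{i-1}\}$ that is $T$-adjacent to $U_{i-1}$.
\begin{itemize}
\item Certainly $a_i$ is $T$-adjacent to $U_{i-1}$ through its parent.
\item Suppose instead that some $w<a_i$ at a later position $l>i$ is $T$-adjacent to $U_{i-1}$. The only $T$-edge from $w$ to an earlier vertex is its parent edge, so its parent has index less than $i$.
\item But $a_i>w$ occurs before $w$, so the index $j'$ defined for $w$ satisfies $j'\ge i$. The parent of $w$ then has index at least $j'\ge i$, a contradiction.
\end{itemize}
Hence $\sigma=\pi_{F_\sigma}$. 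This immediately gives injectivity of $\theta_G$.

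\textbf{Step 3: $F_\sigma$ is even-left.} Apply the key observation with $\sigma=\pi_T$. The parent chosen by Algorithm C is $a_m$ with $m=\min\tilde N_{\sigma,i}$, and it equals $p_T(v)$. So nothing in $\tilde N_{\sigma,i}$ precedes $p_T(v)$, which means $M_{T,v}=\emptyset$, i.e.\ $v$ is $T$-left. Oddness of $1+|N_{T,v}|$ then gives that $|N_{T,v}|$ is even, i.e.\ $v$ is $T$-even. So $F_\sigma\in\mathscr{T}_{G}$.

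\textbf{Step 4: surjectivity.} Conversely, let $T\in\mathscr{T}_{G}$ and put $\sigma=\pi_T$; its first entry is the maximum vertex.
\begin{itemize}
\item Since $M_{T,v}=\emptyset$ for every $v$, the observation gives $|\tilde N_{\sigma,i}|=1+|N_{T,v}|$, which is odd because $|N_{T,v}|$ is even.
\item Again by $M_{T,v}=\emptyset$, $\min\tilde N_{\sigma,i}$ is the position of $p_T(v)$, so Algorithm C adds exactly the edge $p_T(v)v$.
\end{itemize}
Thus $F_\sigma=T$ and $\sigma\in S_{G,odd}$, which proves that $\theta_G$ is a bijection.

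The step I expect to need the most care is Step 2: one must check that edges of $F_\sigma$ always point from earlier to later positions, and that the inversion index $j'$ forces the parent of $w$ to lie at or after position $i$. Everything else is bookkeeping through the key observation.
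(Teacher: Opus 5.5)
The paper gives no proof of this statement: it is quoted from Ma and Yeh \cite{Ma} and used componentwise in the proof of Theorem~\ref{thm2}. There is therefore no in-paper argument to compare your route against.

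Your proof is correct and self-contained. The inverse map is $T\mapsto \pi_T$, and two facts do all the work:
\begin{itemize}
\item the identity $|\tilde N_{\pi_T,i}|=1+|M_{T,v}|+|N_{T,v}|$, together with the fact that $\min \tilde N_{\pi_T,i}$ is the position of $p_T(v)$ exactly when $M_{T,v}=\emptyset$;
\item the identity $\pi_{F_\sigma}=\sigma$ from your Step~2.
\end{itemize}
The argument in Step~2 is sound. If a smaller vertex $w=a_l$ appears after a larger $a_i$, then the inversion index for $w$ is at least $i$. Since Algorithm~C picks the parent at $\min\tilde N_{\sigma,l}$, the parent of $w$ sits at position at least $i$, so $w$ cannot be $F_\sigma$-adjacent to $\{a_1,\dots,a_{i-1}\}$. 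That step relies on every edge of $F_\sigma$ joining a vertex to its unique earlier neighbour, which you noted in Step~1. The same fact justifies identifying the Algorithm~C parent of $a_i$ with $p_T(a_i)$ in Step~3.

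The one ingredient you take on trust is $\pi_T^{-1}(r)\le \pi_T^{-1}(p_T(v))$, which the paper also only asserts. You can add the one-line reason. Suppose $p_T(v)$ preceded $r$. Then at the step of Algorithm~A that selected $r$, the vertex $v$ was already an eligible candidate, being $T$-adjacent to $p_T(v)$. Since $v<r$, this contradicts the minimality of that choice.

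A side benefit of your route is that it makes $\theta_G^{-1}$ explicit as $T\mapsto\pi_T$. The proof of Theorem~\ref{thm2} implicitly needs exactly this when it sets $\sigma_i=\theta^{-1}_{G[V(T_i)]}(T_i)$ and reassembles the $\sigma_i$ into a basic decomposition.
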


We now introduce the definition of basic decomposition, which originates from Gessel \cite{Gessel2}. A permutation on a finite ordered set is said to be \emph{basic} if it starts with its maximum value. Consequently, all permutations fed into Algorithm C are basic. Given a permutation $\sigma = a_1a_2\cdots a_n$ on a finite ordered set $M$, we partition $\sigma$ by finding the maximum element one by one. Suppose that $a_{i_1}$ is the maximum element in $M$, and let $M_1=\{a_{i_1},a_{i_1+1},\dots,a_n\}$. Suppose $a_{i_2}$ is the maximum element in $\{a_{1},a_2,\dots,a_{i_1-1}\}$, and let $M_2=\{a_{i_2},a_{i_2+1},\dots,a_{i_1-1}\}$. After a finite number of these steps and assuming that the number is $t$, we can obtain a partition of $M$. Then we let $\sigma_m=a_{i_m}a_{i_m+1}\cdots a_{i_{m-1}-1}$ be a permutation on $M_m$ for $1\le m\le t$ where $i_0-1=n$. Gessel \cite{Gessel2} called the factorization $\sigma=\sigma_t \sigma_{t-1} \cdots \sigma_{1}$ the \emph{basic decomposition} of $\sigma$, and referred to $\sigma_i$ $(1\le i\le t)$ as the \emph{basic components} of $\sigma$. For example, let $\sigma=3268495$ be a permutation on $\{2,3,4,5,6,8,9\}$. Then its basic decomposition is $32\ 6\ 84\ 95$.

Now we shall give the definition of odd $G$-partitionable permutations.
\begin{definition}
	Given a simple connected graph G with vertex $[n]$ and a permutation $\sigma\in S_n$ with the basic decomposition $\sigma_t \sigma_{t-1} \cdots \sigma_1$. Let $G_i$ be the induced subgraph by the vertices in $\sigma_i$ for $1\le i\le t$. The permutation $\sigma$ is an odd $G$-partitionable permutation if $\sigma_i$ is an odd $G_i$-permutation for all $1\le i\le t$.
\end{definition}

	\textit{Proof of Theorem \ref{thm2}.}\quad Given an odd $G$-partitionable permutation $\sigma$ with its basic decomposition $\sigma_t \sigma_{t-1}\cdots \sigma_1$. Let $G_i$ be the induced subgraph of $G$ by the vertices in $\sigma_i$ for $1\le i\le t$. By Theorem \ref{J.Ma2}, we can obtain that $\theta_{G_i}(\sigma_i)$ is an even-left spanning tree of $G_i$ for $1\le i\le t$. Hence $F=\bigcup^{t}_{i=1}\theta_{G_i}(\sigma_i)$ is an even-left spanning forest of $G$. Define a map $\iota$ from $\Upsilon_{G,odd}$ to $\mathscr{F}_{G}$ by $\iota(\sigma)=F$.

    On the one hand, $\iota$ is surjective. For an even-left spanning forest $F$ of $G$, suppose $F$ has connected components $T_1,T_2,\dots,T_{t}$ that are ordered by their maximum vertices. Then $T_i$ is an even-left spanning tree of $G[V(T_i)]$. Let $\sigma_i=\theta^{-1}_{G[V(T_i)]}(T_i)$ for $1\le i\le t$. Then $\sigma_i$ is an odd $G[V(T_i)]$-permutation and notice $\sigma = \sigma_t \sigma_{t-1} \cdots \sigma_1$ is the basic decomposition of $\sigma$. Hence $\sigma\in\Upsilon_{G,odd}$, and we have $\iota(\sigma)=F$.

    On the other hand, $\iota$ is injective. Given two odd $G$-partitionable permutations $\sigma,\tau$ with their basic decompositions $\sigma_r \sigma_{r-1} \cdots \sigma_1$ and $\tau_s \tau_{s-1} \cdots \tau_1$. Suppose $\iota(\sigma)=\iota(\tau)=F$ where $F\in \mathscr{F}_{G}$ has connected components $T_1,T_2,\dots,T_{t}$ that are ordered by their maximum vertices. Hence $r=t=s$ and the induced subgraph of $G$ by the vertex from $\sigma_i$ or $\tau_i$ is $G[V(T_i)]$ for $1\le i\le t$. Moreover, we have $\theta_{G[V(T_i)]}(\sigma_i)=\theta_{G[V(T_i)]}(\tau_i)=T_i$ for $1\le i\le t$. By Theorem \ref{J.Ma2}, $\theta_{G[V(T_i)]}$ is a bijection. Hence for $1\le i\le t$, $\sigma_i=\tau_i$ and $\sigma=\tau$. 
    
    In conclusion, $\iota$ is a bijection from $\Upsilon_{G,odd}$ to $\mathscr{F}_{G}$. Therefore, $T_{G}(2,-1)=\left| \mathscr{F}_{G} \right|=\left| \Upsilon_{G,odd} \right|$. \qed
    
\begin{example}
	Let $G$ be the graph in Figure \ref{fig:graphG}. We list all the permutations $\sigma\in S_4$ in the Table \ref{tab:permutations_S3_full_width} and give the correspondence from odd $G$-partitionable permutations to even-left spanning forests.
\end{example}

\setlength\LTleft{0pt}
\setlength\LTright{0pt}

\begin{longtable}{@{\extracolsep{\fill}}lcccc@{}}
\caption{All the permutations in $S_4$.}\label{tab:permutations_S3_full_width}\\
\hline
 & $\sigma$ & basic decompositions & forests \\
\hline
\endfirsthead  

\multicolumn{5}{c}{Table \thetable\ continued}\\
\hline
 & $\sigma$ & basic decompositions & forests \\
\hline
\endhead  

\hline
\multicolumn{5}{r}{}\\
\endfoot  

\hline
\endlastfoot  

$\sigma_1$ & $1234$ & $1\ 2\ 3\ 4$ & $F_1$ \\
$\sigma_2$ & $1243$ & $1\ 2\ 43$ & $F_4$ \\
$\sigma_3$ & $1324$ & $1\ 32\ 4$ & $F_3$ \\
$\sigma_4$ & $1342$ & $1\ 3\ 42$ & $\backslash$ \\
$\sigma_5$ & $1423$ & $1\ 423$ & $\backslash$ \\
$\sigma_6$ & $1432$ & $1\ 432$ & $F_{8}$ \\
$\sigma_7$ & $2134$ & $21\ 3\ 4$ & $F_2$\\
$\sigma_8$ & $2143$ & $21\ 43$ & $F_{10}$\\
$\sigma_9$ & $2314$ & $2\ 31\ 4$ & $F_{6}$\\
$\sigma_{10}$ & $2341$ & $2\ 3\ 41$ & $F_{5}$\\
$\sigma_{11}$ & $2413$ & $2\ 413$ & $\backslash$\\
$\sigma_{12}$ & $2431$ & $2\ 431$ & $F_{12}$\\
$\sigma_{13}$ & $3124$ & $312\ 4$ & $\backslash$\\
$\sigma_{14}$ & $3142$ & $31\ 42$ & $\backslash$\\
$\sigma_{15}$ & $3214$ & $321\ 4$ & $F_{7}$\\
$\sigma_{16}$ & $3241$ & $32\ 41$ & $F_{11}$\\
$\sigma_{17}$ & $3412$ & $3\ 412$ & $F_{9}$\\
$\sigma_{18}$ & $3421$ & $3\ 421$ & $\backslash$\\
$\sigma_{19}$ & $4123$ & $4123$ & $F_{24}$\\
$\sigma_{20}$ & $4132$ & $4132$ & $\backslash$\\
$\sigma_{21}$ & $4213$ & $4213$ & $\backslash$\\
$\sigma_{22}$ & $4231$ & $4231$ & $\backslash$\\
$\sigma_{23}$ & $4312$ & $4312$ & $\backslash$\\
$\sigma_{24}$ & $4321$ & $4321$ & $F_{23}$\\
\hline
\end{longtable}

\section{A bijection from $\Upsilon_{K_n,odd}$ to $A_{n+1}$}
Consider complete graph $K_n$ with vertex set $[n]$.  We denote $p_{n} = \left| S_{K_{n},odd} \right|$, $q_{n} = \left| \Upsilon_{K_{n},odd}\right|$ and put $p_0=q_0=1$. By direct calculation, we get $p_1=p_2=1$, $q_1=p_3=1$, $q_2=p_4=2$ and $q_3=p_5=5$.

Ma and Yeh \cite{Ma} established the following equation by giving the same recurrence of $p_{n+1}$ as $a_n$, which can also be indirectly deduced from \cite{Goulden,Kuznetsov}.

\begin{theorem}\emph{\cite{Goulden,Kuznetsov,Ma}}\label{J.Ma3}
	$p_{n+1} = a_{n}$ for $n=0,1,2,\dots$.
\end{theorem}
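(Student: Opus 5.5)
Since this statement is quoted from \cite{Goulden,Kuznetsov,Ma}, I would reprove it by working directly with the definition of odd $K_{n+1}$-permutations and Algorithm C. The plan has three steps: make the oddness condition explicit on permutations, recast it as a condition on ordered trees, and then match the resulting counting recurrence with that of $a_n$.

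\textbf{Step 1: reduce the definition.} In $K_{n+1}$ every pair of vertices is adjacent. So for $\sigma=a_1a_2\cdots a_{n+1}$ with $a_1=n+1$, the set $\tilde N_{\sigma,i}$ is exactly $\{j,j+1,\dots,i-1\}$, where $j$ is the position of the nearest larger entry to the left of $a_i$. This set is never empty, so $F_\sigma$ is automatically a spanning tree. The odd condition therefore becomes purely order-theoretic:
\[
\sigma \in S_{K_{n+1},odd} \iff a_1=n+1 \text{ and, for every } i\ge 2,\ i-j(i) \text{ is odd}.
\]

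\textbf{Step 2: encode $\sigma$ as an ordered tree.} I would attach to $\sigma$ the ``nearest-left-greater'' tree: the parent of $a_i$ is $a_{j(i)}$, and siblings are ordered by position. This is a bijection between basic permutations and decreasing ordered (plane) trees, in which $\sigma$ is recovered by a preorder reading. In preorder, the entries strictly between positions $j(i)$ and $i$ are exactly the earlier siblings of $a_i$ together with all their descendants. Hence
\[
i-j(i)=1+\sum_{\text{earlier siblings } c} |T_c|,
\]
where $T_c$ is the subtree rooted at $c$. The condition ``$i-j(i)$ odd for all $i$'' is then equivalent to: at every node, every child except possibly the last has a subtree of even size.

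\textbf{Step 3: count and compare recurrences.} Let $P(x)=\sum_{m\ge 1} p_m x^{m-1}/(m-1)!$ be the exponential generating function for these trees, counted by the number of non-root vertices. Removing the root leaves an ordered forest in which all components except the last have even size. This gives
\[
P'(x)=\frac{P_{\mathrm{odd}}(x)}{1-E(x)},
\]
where $E$ is the even-size part of $\int P$ and $P_{\mathrm{odd}}$ accounts for the last component. Equivalently, I would look at the last child of the root directly, which yields a convolution recurrence of the form
\[
2a_{n+1}=\sum_{k}\binom{n}{k}a_k a_{n-k}.
\]
I would check that $p_{n+1}$ satisfies the same recurrence, and then confirm the base cases $p_1=p_2=1$ and $p_3=1$, $p_4=2$, $p_5=5$ recorded above.

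\textbf{Main obstacle.} I expect the hardest step to be setting up this recurrence cleanly. The parity constraint couples the sizes of the root's subtrees, so the natural decomposition produces even- and odd-part generating functions rather than a single clean equation. My first attempt would be to split the root's children into the final child $c$ and an ordered sequence of even-size subtrees. I would then show that the pair (last subtree, everything else) splits the label set in the way Euler's recurrence for $a_n$ does. This amounts to verifying that $\sec x+\tan x$ satisfies the resulting differential equation $y'=(1+y^2)/2$.
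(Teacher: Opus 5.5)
Your Steps 1 and 2 are correct. In $K_{n+1}$ one has $|\tilde N_{\sigma,i}|=i-j(i)$, and via the nearest-left-greater tree the odd condition becomes ``every child except the last has a subtree of even order''. The gap is Step 3, which is where all of the counting happens.

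First, these trees are not free plane trees. If $a_i$ and $a_{i'}$ with $i<i'$ have the same parent $a_j$, then every entry strictly between positions $j$ and $i'$ is smaller than $a_{i'}$, so $a_i<a_{i'}$. Siblings are therefore forced to increase from left to right. This is consistent with the counts: for $N\ge 3$ there are $(N-1)!$ basic permutations of $[N]$ but $(2N-3)!!$ decreasing plane trees. So the forest under the root is a \emph{set}, and its ``last'' tree is always the subtree of the vertex $n$. Your factor $1/(1-E)$ treats it as a sequence and overcounts. On the labels $\{1,2,3\}$ it produces $1+3=4$ forests, whereas $p_4=2$ (only $4213$ and $4321$ are odd).

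The displayed identity $P'=P_{\mathrm{odd}}/(1-E)$ is in fact false under either natural reading of $P_{\mathrm{odd}}$: its coefficient of $x^0$ or of $x^1$ already disagrees with $p_2=p_3=1$. Finally, Euler's recurrence $2a_{n+1}=\sum_k\binom nk a_ka_{n-k}$ is not what a last-child split produces. Your sentence ``I would check that $p_{n+1}$ satisfies the same recurrence'' is precisely the content of the theorem, and nothing in the proposal establishes it.

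Here is what the split actually gives. Remove the subtree of $n$; it has order $k+1$, and its other labels are chosen in $\binom{n-1}{k}$ ways. What remains is the root with only even-order children, which is exactly a valid tree of odd order $n-k$. Hence, for $n\ge 1$,
\[
p_{n+1}=\sum_{\substack{0\le k\le n-1\\ n-k\ \text{odd}}}\binom{n-1}{k}\,p_{k+1}\,p_{n-k},
\]
that is, $P'=P\cdot P_{\mathrm e}$ with $P_{\mathrm e}(x)=\tfrac12\bigl(P(x)+P(-x)\bigr)$. This is not Euler's recurrence, so one more step is needed. Either of the following works.
\begin{itemize}
\item Observe that $\sec x+\tan x$ has even part $\sec x$ and $(\sec x+\tan x)'=\sec x\,(\sec x+\tan x)$. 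So it satisfies the same recurrence with the same initial value $p_1=a_0=1$.
\item Note that $(\log P)'=P_{\mathrm e}$ is even, so $\log P$ is odd and $P(x)P(-x)=1$. Then $P_{\mathrm e}=\tfrac12(P+P^{-1})$, and therefore $2P'=1+P^2$, which is the ODE you were aiming for.
\end{itemize}
With this repair your tree argument becomes a self-contained alternative to the paper. The paper does not prove the statement itself; it quotes Ma and Yeh's recurrence $p_{n+2}=\sum_{k=0}^{n-1}\binom{n-1}{k}p_{k+2}p_{n-k}$, equivalently $y''=yy'$ for $y=\sum_{m\ge0} p_{m+1}x^m/m!$.
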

Ma and Yeh \cite{Ma} also constructed a bijection from $S_{K_{n+1},odd}$ to $A_n$.
\begin{theorem}\emph{\cite{Ma}}
	There is a bijection $\Phi_{n}$ from $S_{K_{n+1},odd}$ to $A_n$.
\end{theorem}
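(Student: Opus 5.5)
The plan is to exploit the fact that in $K_{n+1}$ every pair of vertices is adjacent, so that membership in $S_{K_{n+1},odd}$ becomes a purely order-theoretic condition. Both $S_{K_{n+1},odd}$ and $A_n$ should then admit the same recursive decomposition, obtained by cutting at the position of the largest remaining letter. The bijection $\Phi_n$ is built by induction on $n$ from these two decompositions.

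\textbf{Step 1 (reformulation).} Let $\sigma=a_1a_2\cdots a_{n+1}$ with $a_1=n+1$. Every $a_ka_i$ is an edge, so in Algorithm C we get $\tilde N_{\sigma,i}=\{j,j+1,\dots,i-1\}$, where $j$ is the last index before $i$ with $a_j>a_i$. This set is nonempty, so an edge is added at every time $i\ge 2$ and $F_\sigma$ is automatically a spanning tree. Hence $\sigma\in S_{K_{n+1},odd}$ if and only if, for every $i\ge 2$, the distance $i-j$ from $a_i$ back to its nearest larger letter on the left is odd. The same description holds on any finite ordered set, with $a_1$ the maximum.

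\textbf{Step 2 (decomposition of odd permutations).} Let $n$ sit at position $k$ in $\sigma$. Its nearest larger letter is $a_1$, so $k-1$ is odd and $m:=k-2$ is even. Every letter to the right of position $k$ has its nearest larger letter at an index $\ge k$, because $n$ itself lies in between. Every letter at a position $<k$ has its nearest larger letter at a position $<k$. Therefore $\sigma$ splits as $L\,R$, where:
\begin{itemize}
\item $L=(n+1)a_2\cdots a_{k-1}$ is a basic permutation of an $(m+1)$-set satisfying the odd condition;
\item $R=n\,a_{k+1}\cdots a_{n+1}$ is a basic permutation of an $(n-m)$-set satisfying the odd condition.
\end{itemize}
Conversely, choose any even $m$, any $m$-subset $S$ of $[n-1]$, an odd permutation headed by $n+1$ on $S\cup\{n+1\}$, and an odd permutation headed by $n$ on the complementary set together with $n$. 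Their concatenation is odd, by the same nearest-larger argument. This correspondence is bijective.

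\textbf{Step 3 (decomposition of alternating permutations).} For $\pi\in A_n$, the letter $n$ must be a peak, so it occupies an odd position $m+1$ with $m$ even. The prefix $\pi_L$ is a down-up word of even length $m$; it ends with an ascent into $n$, which imposes no further constraint. The suffix $\pi_R$, after $n$, is a down-up word of length $n-1-m$. The underlying sets form an arbitrary split of $[n-1]$ with $|S|=m$. Conversely, any such triple (set split, $\pi_L$, $\pi_R$) gives a permutation in $A_n$.

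\textbf{Step 4 (the bijection).} Define $\Phi_n$ recursively by
$$\Phi_n(\sigma)=\Phi(L)\; n\;\Phi(R).$$
Here $\Phi(L)$ is $\Phi_m$ applied to $L$ after order-preserving relabelling of $S\cup\{n+1\}$ to $[m+1]$, with the result relabelled back onto $S$. Similarly, $\Phi(R)$ is $\Phi_{n-1-m}$ applied to $R$ relabelled to $[n-m]$, then relabelled back. The base case is $\Phi_0$, sending the one-letter permutation to the empty word. Since both decompositions are indexed by the same data (even $m$, an $m$-subset $S$, and two smaller objects of the matching type), induction shows that $\Phi_n$ is well defined and bijective. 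This also recovers Theorem \ref{J.Ma3}, since both counts satisfy
$$p_{n+1}=\sum_{m\ \mathrm{even}}\binom{n-1}{m}p_{m+1}p_{n-m}.$$

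The main obstacle is Step 2: verifying carefully that the odd condition localizes to the two blocks. For letters of $R$, the distances to their nearest larger letters must be unchanged when $R$ is read on its own, with $n$ at its first position. Relabelling must also preserve these distances. Both facts hold because the condition depends only on relative order and positions, but this is the step that needs checking.
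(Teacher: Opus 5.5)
The paper only quotes this result from Ma and Yeh, so your argument has to stand on its own. Steps 1 and 2 are sound. In $K_{n+1}$ the odd condition says exactly that each $a_i$ $(i\ge 2)$ lies at odd distance from its nearest larger letter to the left. Cutting at the letter $n$ localizes this condition to the basic blocks $L$ and $R$, which gives $p_{n+1}=\sum_{m\ \mathrm{even}}\binom{n-1}{m}p_{m+1}p_{n-m}$.

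The gap is in Step 3, and it breaks the map in Step 4. In $\pi\in A_n$, odd positions are local maxima and even positions are local minima. The letter following $n$ sits at the even position $m+2$, so $\pi(m+2)<\pi(m+3)>\pi(m+4)<\cdots$. Thus the suffix $\pi_R$ is an \emph{up-down} word, not a down-up word, and both the forward claim and the converse claim of Step 3 are false.

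Concretely, take $n=3$ and $\sigma=4321\in S_{K_4,odd}$. Then $m=0$, $L=4$, $R=321$, and $\Phi_2(321)=21$ (forced, since $A_2=\{21\}$). Your formula gives $\Phi_3(4321)=321\notin A_3$, while $312\in A_3$ is never reached. The count in Step 3 survives only because up-down and down-up words on a given set are equinumerous.

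The repair is to complement the right block: define $\Phi_n(\sigma)=\Phi(L)\,n\,\Phi(R)^{c}$, where $(\cdot)^{c}$ is the relative complement on the letter set $[n-1]\setminus S$. Complementation is an involution exchanging down-up and up-down words on a fixed set. Step 3 then becomes a genuine bijective decomposition and the induction goes through; for example, $\Phi_3(4321)=312$ and $\Phi_3(4213)=213$. The paper's own map $\Psi_n$ applies $(\cdot)^{c}$ to the block placed after $n$ for the same reason.
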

If $\sigma\in S_{K_{n+1},odd}$, then $\sigma(1)=n+1$ is fixed to the maximum vertex of $K_{n+1}$. For convenience, when it involves the bijection $\Phi_{n}$, we omit $\sigma(1)$ from $\sigma$.

A classic recurrence formula of $a_n$ is derived from \cite{Goulden} as follows:
\[
    a_{n+2}=\sum_{k=0}^{n}{n\choose k}a_{k+1}a_{n-k},\quad n\ge 0.
\]
Then $p_{n+1}$ has the same recurrence formula as $a_n$ and we have
\begin{equation}\label{eq4}
	p_{n+2}=\sum_{k=0}^{n-1}{n-1\choose k}p_{k+2}p_{n-k},\quad n\ge 1.
\end{equation}

We now prove $q_{n}=a_{n+1}$ by establishing a recurrence formula of $q_{n}$. 

	\textit{Proof of Theorem \ref{thm3}.} Suppose $\sigma\in \Upsilon_{K_{n},odd}$ has the basic decomposition 
    $$\sigma = \sigma_t \sigma_{t-1}\cdots \sigma_2 \sigma_1.$$ 
    Then $\sigma_{1}$ begins with $n$. Suppose there are $m$ $(1\le m\le n)$ numbers in $\sigma_1$. Then $\sigma_{1}$ can be seen as an odd $K_{m}$-permutation and $\sigma' = \sigma_t \sigma_{t-1}\cdots \sigma_2$ can be seen as an odd $K_{n-m}$-partitionable permutation. This suggests that 
	\begin{equation}\label{eq5}
	    q_{n} = \sum_{m=1}^{n}{n-1\choose m-1}q_{n-m}p_{m},\quad n\ge 1.
	\end{equation}
	Hence $q_{n}$ is determined by formula \ref{eq5} with initial condition $q_0=1$. Assert that $q_{n} = p_{n+2}$. In fact, $q_{0} = p_{2} = 1$, and when $q_{n} = p_{n+2}$,  we have, for $n\ge 1$,
	\begin{align*}
		p_{n+2} =& \sum_{m=1}^{n}{n-1\choose m-1}p_{n-m+2}p_{m}\\
		=& \sum_{l=0}^{n-1}{n-1\choose l}p_{n-l+1}p_{l+1} \\
		=& \sum_{k=0}^{n-1}{n-1\choose k}p_{k+2}p_{n-k}.
	\end{align*}
	These are consistent with formula \ref{eq4}.\qed

To give the last bijection from $\Upsilon_{K_n,odd}$ to $A_{n+1}$, we introduce the definitions of relative complement and standardization of a permutation, which are due to \cite{Donaghey} and \cite{Perian}, respectively. Given a permutation $\sigma = a_1a_2\cdots a_n$ and $\{a_i\,|\, 1\le i\le n\}$ need not be $[n]$. Suppose that the rank of $a_i$ in $\{a_i\,|\, 1\le i\le n\}$ is $\sigma_{r}(i)$. We say $R_{\sigma}$ is the \textit{standardization action} of $\sigma$ that $$R_{\sigma}(\sigma) = \sigma_{r}(1) \cdots \sigma_{r}(i)\cdots\sigma_{r}(n)$$
and $R_{\sigma}^{-1}$ is an action to restore the corresponding numbers of $\sigma$. We say $R_{\sigma}(\sigma)\in S_{n}$ to be the \emph{standardization} of $\sigma$. Let $\pi = (n+1-\sigma_{r}(1))\cdots (n+1-\sigma_{r}(i))\cdots (n+1-\sigma_{r}(n))$. We say $R_{\sigma}^{-1}(\pi)$ is the \textit{relative complement} of $\sigma$ and we denote it by $\sigma^{c}$. Let $r_{\sigma}(\sigma) =  (\sigma_{r}(1)-1) \cdots (\sigma_{r}(i)-1) \cdots (\sigma_{r}(n)-1)$ be a permutation on $\{0,1,\dots,n-1\}$ and $r_{\sigma}^{-1}$ be an action to restore the corresponding numbers of $\sigma$.
For example, let $\sigma = 85627$. Then $R_{\sigma}(\sigma) = 52314$, $\sigma^{c} = 27685$ and $r_{\sigma}(\sigma) = 41203$.

We now present the proof of Theorem \ref{thm4}.

	\textit{Proof of Theorem \ref{thm4}.}\quad Let $V(K_{n}) = [n]$. We construct a bijection $\Psi_{n}$ from $\Upsilon_{K_{n},odd}$ to the set of alternating permutations on $\{ 0,1,2\dots,n \}$ by induction $n$. Clearly, $\Psi_1(1)=10$. Then suppose that $\Psi_{m}$ is the bijection from $\Upsilon_{K_{m},odd}$ to the set of alternating permutations on $\{ 0,1,2\dots,m \}$ for $1\le m\le n-1$.
    
	Given a permutation $\sigma \in \Upsilon_{K_{n},odd}$. Suppose the basic decomposition of $\sigma$ is $$\sigma = \sigma_t \sigma_{t-1}\cdots \sigma_2 \sigma_1.$$
	By the definition of odd $G-$partitionable permutations, $R_{\sigma_i}(\sigma_i)$ is an odd $G_{i}$-permutation for $1\le i\le t$ where $G_{i}=K_{k_i}$ and $k_i$ is the number of elements in $\sigma_i$. Suppose there are $n-j+1$ numbers in $\sigma_1$, then we obtain $\sigma(j) = n$ since $\sigma_1$ begins with $n$. Let $\sigma' = \sigma_t \sigma_{t-1}\cdots \sigma_2$ and $\sigma'' = \sigma_{1}\backslash n$. Hence $\sigma=\sigma'n\sigma''$ and $R_{\sigma'}(\sigma')$ is an odd $K_{j-1}$-partitionable permutation. Then $\Psi_{j-1}(R_{\sigma'}(\sigma'))$ is an alternating permutation on $\{0,1,\dots,j-1\}$ and every number except $0$ can be restored by $R_{\sigma'}^{-1}$. We let $s_{\sigma'}$ be an action that $s_{\sigma'}(\sigma')$ is the permutation which from $\Psi_{j-1}(R_{\sigma'}(\sigma'))$ numbers except $0$ are restored by $R^{-1}_{\sigma'}$.
    
    We give the correspondence of $\sigma$ by the following:
	\[
	\Psi_{n}(\sigma) = 
	\begin{cases} 
		n\ 0\ R_{\sigma''}^{-1}\left[ \Phi_{n-1}(R_{\sigma''}(\sigma''))\right], & j = 1; \\[1em]
		\left(\, s_{\sigma'}(\sigma')^{c}\ n\ (R_{\sigma''}^{-1} \left[ \Phi_{n-j}(R_{\sigma''}(\sigma'')) \right])^{c} \,\right)^{c}, & j\ \text{ is odd and }j>1; \\[1em]
		s_{\sigma'}(\sigma')\ n\ (R_{\sigma''}^{-1} \left[ \Phi_{n-j}(R_{\sigma''}(\sigma'')) \right])^{c} \, , & j\ \text{ is even}.
	\end{cases}
	\] 
	It can be easily verified that $\Psi_{n}(\sigma)$ is an alternating permutation on $\{ 0,1,2\dots,n \}$.
    
	Conversely, given an alternating permutation $\tau=\tau(0) \tau(1) \cdots \tau(n)$ on $\{ 0,1,2\dots,n \}$. Note that if $\tau(i) = 0$ and $\tau(j)=n$, then $i$ is odd and $j$ is even. We give the reverse of $\Psi_{n}$ to complete the proof in the following.
    
    \textbf{Case 1.} $j<i$ and $i=1.$
    
	$\tau = n0 \tau(2) \cdots \tau(n)$. Let $\tau'' = \tau(2)\tau(3)\cdots \tau(n)$. Then $\tau''$ is an alternating permutation. Therefore,
	$$\Psi^{-1}_{n}(\tau) = n\ R_{\tau''}^{-1}\left[ \Phi_{n-1}^{-1}(R_{\tau''}(\tau'')) \right].$$

    \textbf{Case 2.} $j<i$ and $i>1.$ 
    
	Notice that $\tau^{c} = (n-\tau(0))(n-\tau(1)) \cdots (n-\tau(n))$ and $\tau^{c}(j)=0,\tau^{c}(i)=n$. Let $\tau' = (\tau^{c}(0) \cdots \tau^{c}(i-1))^{c}$ and $\tau'' = (\tau^{c}(i+1) \cdots \tau^{c}(n))^{c}$. Then $\tau'$ and $\tau''$ are both alternating permutations. Therefore,
	$$\Psi^{-1}_{n}(\tau) = r_{\tau'}^{-1}\left[ \Psi_{i-1}^{-1}(r_{\tau'}(\tau')) \right]\ n\ R_{\tau''}^{-1}\left[ \Phi_{n-i}^{-1}(R_{\tau''}(\tau'')) \right].$$
    
	\textbf{Case 3.} $j>i.$
    
	Let $\tau' = \tau(0)\cdots \tau(j-1)$ and $\tau'' = (\tau(j+1),\dots, \tau(n))^{c}$. Then $\tau'$ and $\tau''$ are both alternating permutations. Therefore, 
	$$\Psi^{-1}_{n}(\tau) = r_{\tau'}^{-1}\left[ \Psi_{j-1}^{-1}(r_{\tau'}(\tau')) \right]\ n\ R_{\tau''}^{-1}\left[ \Phi_{n-j}^{-1}(R_{\tau''}(\tau'')) \right].$$
    This completes the proof.\qed

\begin{example}
	We give the bijections of complete graph $K_1,K_2,K_3,K_4$ in Table \ref{Bijections}.
\end{example}

\begin{table}[ht]
\centering
\caption{Bijections of $K_1,K_2,K_3,K_4$}
\label{Bijections}
\begin{tabular*}{\linewidth}{@{\extracolsep{\fill}}lcccc@{}}
\hline
 & $\Upsilon_{K_{n},odd}$ & decompositions & alternating permutations  \\
\hline
$K_1$ & $1$ & $1$ & $10$ \\
$K_2$ & $12$ & $1\ 2$ & $102$ \\
      & $21$ & $21$ & $201$ \\
$K_3$ & $123$ & $1\ 2\ 3$ & $2130$ \\
      & $132$ & $1\ 32$ & $1032$ \\
      & $213$ & $21\ 3$ & $3120$ \\
      & $231$ & $2\ 31$ & $2031$\\
      & $321$ & $321$ & $3021$\\
$K_4$ & $1234$ & $1\ 2\ 3\ 4$ & $21304$\\
      & $1243$ & $1\ 2\ 43$ & $32401$\\
      & $1324$ & $1\ 32\ 4$ & $10324$\\
      & $1342$ & $1\ 3\ 42$ & $31402$\\
      & $1432$ & $1\ 432$ & $10423$\\
      & $2134$ & $21\ 3\ 4$ & $31204$\\
      & $2143$ & $21\ 43$ & $42301$\\
      & $2314$ & $2\ 31\ 4$ & $20314$\\
      & $2341$ & $2\ 3\ 41$ & $21403$\\
      & $2431$ & $2\ 431$ & $20413$\\
      & $3142$ & $31\ 42$ & $41302$\\
      & $3214$ & $321\ 4$ & $30214$\\
      & $3241$ & $32\ 41$ & $41203$\\
      & $3421$ & $3\ 421$ & $30412$\\
	   & $4321$ & $4321$ & $40312$\\
      & $4213$ & $4213$ & $40213$\\
\hline
\end{tabular*}
\end{table}

\noindent\emph{Remark:} As we know, the number of even-left spanning trees of $K_{n+2}$, 0-1-2 increasing spanning trees of $K_{n+1}$, 0-1-2 increasing spanning forests of $K_{n}$ at most two components, even increasing spanning trees of $K_{n+2}$ and even increasing spanning forests of $K_{n}$ are all equal to $a_{n+1}$. From Theorem \ref{thm4}, the bijections between even-left spanning forests of $K_{n}$ and those above trees or forests can be given explicitly, indirectly through the bijections between them and alternating permutations.
\newpage
\section*{Acknowledgements}

This work is supported by National Natural Science Foundation of China
(Nos. 12571379, 12571366) and Scientific Research Start-Up Foundation of Jimei University (No. ZQ2024116).

\section*{Declarations}
\noindent
{ \bf Conflict of interest}
The authors declare that they have no conflict of interest.

\section*{References}
\begingroup
\renewcommand{\section}[2]{}%

\endgroup
\end{document}